\documentclass[a4paper,leqno]{amsart}
\usepackage{amsmath, amssymb, amsthm}

\usepackage[mathscr]{eucal}


\newtheorem{thm}{\textbf{Theorem}}[section]
\newtheorem{prop}[thm]{\textbf{Proposition}}
\newtheorem{lem}[thm]{\textbf{Lemma}}
\newtheorem{cor}[thm]{\textbf{Corollary}}

\theoremstyle{definition}
\newtheorem{defn}[thm]{{\rm Definition}}
\newtheorem{ex}[thm]{{\rm Example}}

\newcommand{\olapla}{\overline\bigtriangleup}
\newcommand{\onabla}{\overline\nabla}
\newcommand{\wnabla}{\widetilde\nabla}
\newcommand{\lapla}{\bigtriangleup}

\newcommand{\p}{\phi}

\newcommand{\vp}{\varphi}

\title{k-harmonic immersion and submersion into a sphere}

\author{Shun Maeta}

\curraddr{Nakakuki 3-10-9 Oyama-shi Tochigi
Japan}
\email{shun.maeta@gmail.com}

\subjclass[2000]{primary 58E20, secondary 53C43}

\begin{document}




\begin{abstract}
 J. Eells and L. Lemaire introduced k-harmonic maps,
 and Wang Shaobo showed the first variational formula.
 When, k=2, it is called biharmonic maps (2-harmonic maps). There have been
 extensive studies in the area.
 In this paper, we study k-harmonic immersion into a sphere, and get the relationship between 
 radius and "k" of k-harmonic. And we also consider k-harmonic submersion.
Furthermore, we construct non harmonic k-harmonic by Hopf map. 
\end{abstract}

\maketitle


\vspace{10pt}
\begin{flushleft}
{\large {\bf Introduction}}
\end{flushleft}
Theory of harmonic maps has been applied into various fields in differential geometry.
 The harmonic maps between two Riemannian manifolds are
 critical maps of the energy functional $E(\p)=\frac{1}{2}\int_M\|d\p\|^2v_g$, for smooth maps $\p:M\rightarrow N$.
 
On the other hand, in 1981, J. Eells and L. Lemaire \cite{jell1} proposed the problem to consider the {\em $k$-harmonic maps}:
 they are critical maps of the functional 
 \begin{align*}
 E_{k}(\p)=\int_Me_k(\p)v_g,\ \ (k=1,2,\dotsm),
 \end{align*}
 where $e_k(\p)=\frac{1}{2}\|(d+d^*)^k\p\|^2$ for smooth maps $\p:M\rightarrow N$.
G.Y. Jiang \cite{jg1} studied the first and second variational formulas of the bi-energy $E_2$, 
and critical maps of $E_2$ are called {\em biharmonic maps} ({\em 2-harmonic maps}). There have been extensive studies on biharmonic maps.
 
In 1989, Wang Shaobo \cite{ws1} studied the first variational formula of the 
$k$-energy $E_k$,
 whose critical maps are called $k$-harmonic maps.
 Harmonic maps are always $k$-harmonic maps by definition.
 Especially, harmonic maps are always biharmonic maps.
 Currently, there are a lot of paper of biharmonic maps.
 But in this paper, we show biharmonic is not always $k$-harmonic $(k\geq 3)$.
 More generally, $s$-harmonic is not always $k$-harmonic $(s<k)$.
 This may imply that we must study $k$-harmonic maps  for generalized theory of harmonic maps.
 
In this paper, we study $k$-harmonic immersion and submersion.

In $\S \ref{preliminaries}$, we introduce notation and fundamental formulas of the tension field.

In $\S \ref{k-harmonic}$, we recall $k$-harmonic maps.

In $\S \ref{immersion}$, we study $k$-harmonic isometric immersion into a sphere, and obtain the necessary and sufficient condition
 for $\p$ to be $k$-harmonic map.
Furthermore we get several examples of $k$-harmonic maps.

Finally, in $\S \ref{submersion}$, we study $k$-harmonic submersion.
 And we construct a non harmonic $k$-harmonic map by Hopf map.

\vspace{30pt}
\section{Preliminaries}\label{preliminaries}
Let $(M,g)$ be an $m$ dimensional Riemannian manifold,
 $(N,h)$ an $n$ dimensional one,
 and $\p:M\rightarrow N$, a smooth map.
 We use the following notation.
 The second fundamental form $B(\p)$
 of $\p$ is a covariant differentiation $\widetilde\nabla d\p$ of $1$-form $d\p$,
 which is a section of $\odot ^2T^*M\otimes \p^{-1}TN$. And we denote $B(\p)$ by $B$.
For every $X,Y\in \Gamma (TM)$, let
 \begin{equation}
 \begin{split}
 B(X,Y)
&=(\widetilde\nabla d\p)(X,Y)=(\widetilde\nabla_X d\p)(Y)\\
&=\overline\nabla_Xd\p(Y)-d\p(\nabla_X Y)=\nabla^N_{d\p(X)}d\p(Y)-d\p(\nabla_XY). 
 \end{split}
 \end{equation}
 Here, $\nabla, \nabla^N, \overline \nabla, \widetilde \nabla$ are the induced connections on the bundles $TM$,
 $TN$, $\p^{-1}TN$ and $T^*M\otimes \p^{-1}TN$, respectively.
 
 And we also denote $B(e_i,e_j)$ by $B_{ij}$, where $\{e_i\}_{i=1}^m$ is locally defined orthonormal frame field on $(M,g)$.
 
 If $M$ is compact,
 we consider critical maps of the energy functional
 \begin{align}
 E(\p)=\int_M e(\p) v_g,
 \end{align}
where $e(\p)=\frac{1}{2}\|d\p\|^2=\sum^m_{i=1}\frac{1}{2}\langle d\p(e_i),d\p(e_i)\rangle$
 which is called the {\em energy density} of $\p$, and the inner product 
 $\langle \cdot ,\cdot \rangle$ is a Riemannian metric $h$. 
 The {\em tension \ field} $\tau(\p)$ of $\p$ is defined by
 \begin{align}
 \tau(\p)=\sum^{m}_{i=1}(\widetilde \nabla d\p)(e_i,e_i)=\sum^m_{i=1}(\widetilde \nabla _{e_i}d\p)(e_i).
 \end{align}
 Then, $\p$ is a {\em harmonic map} if $\tau(\p)=0$.
 
 The curvature tensor field $ R^N(\cdot, \cdot)$ of the Riemannian metric on the bundle 
$TN$ is defined as follows :
 \begin{align}
R^N(X,Y)
=\nabla^N_X \nabla^N_Y - \nabla^N _Y \nabla^N_X-\nabla^N_{[X,Y]},
\ \ \ \ (X,Y\in \Gamma (TN)).
\end{align}
 

$\olapla
=\onabla^* \onabla
=-\sum^m_{k=1}(\onabla_{e_k}\onabla_{e_k}
-\onabla_{\nabla_{e_k}e_k}),$ 
is the {\em rough Laplacian}.



And G.Y.Jiang \cite{jg1} showed that $\phi:(M,g)\rightarrow (N,h)$ is a biharmonic (2-harmonic) if and only if 
$$\olapla \tau(\p) -R^N(\tau(\p),d\p(e_i))d\p(e_i)=0.$$



\vspace{30pt}

\section{$k$-harmonic maps}\label{k-harmonic}

J. Eells and L. Lemaire \cite{jell1} proposed the notation of $k$-harmonic maps. 
The Euler-Lagrange equation for the $k$-harmonic maps was shown by
Wang Shaobo \cite{ws1}.
In this section, we recall $k$-harmonic maps (cf \cite{sm1},\cite{ws1}).

We consider a smooth variation $\{\p_{t}\}_{t\in I_{\epsilon}} (I_{\epsilon}=(-\epsilon, \epsilon))$ of $\p$ with parameter $t$,
 i.e. we consider the smooth map $F$  given by
$$F : I_{\epsilon}\times M\rightarrow N, F (t,p)=\p_{t}(p),$$
where $F (0,p)=\p_{0}(p)=\p(p), $ for all $p\in M$.

The corresponding variational vector field $V$ is given by
\begin{align*}
V(p)=\left.\frac{d}{dt}\right|_{t=0}\p_{t}(p)\in T_{\p(p)}N,
\end{align*}
$V$ are section of $\p^{-1}TN$, i.e., $V\in \Gamma (\p^{-1}TN)$.
 

\vspace{20pt}

\begin{defn}[\cite{jell1}]
For $k=1,2,\dotsm$ the {\em $k$-energy functional}
 is defined by 
\begin{align*}
E_k(\phi)=\frac{1}{2}\int_M\|(d+d^* )^k\phi\|^2v_g,\ \ \phi\in C^{\infty}(M,N).
\end{align*}
Then, $\phi$ is {\em $k$-harmonic} if it is a critical point of $E_k,$ i.e., for all smooth variation $\{\phi_t\}$ of $\phi$ with $\phi_0=\phi$,
\begin{align*}
\left.\frac{d}{dt}\right|_{t=0}E_k(\phi_t)=0.
\end{align*}
We say for a $k$-harmonic map to be {\em proper} if it is not harmonic. 
\end{defn}

\begin{thm}[\cite{ws1}]\label{2s-harmonic}
Let $k=2s\ (s=1,2,\cdots)$,
\begin{align*}
\left.\frac{d}{dt}\right|_{t=0}E_{2s}(\p_t)=-\int_M\langle \tau_{2s}(\p), V\rangle,
\end{align*}
where, 
\begin{align}\label{2s-harmonic eq}
\tau_{2s}(\p)
=&\olapla^{2s-1}\tau(\p)
-R^N(\olapla^{2s-2}\tau(\p),d\p(e_j))d\p(e_j)\notag\\
&-\sum^{s-1}_{l=1}
\{
R^N(\onabla_{e_j}\olapla^{s+l-2}\tau(\p),\olapla^{s-l-1}\tau(\p) )d\p(e_j) \\
&\hspace{32pt}-R^N(\olapla^{s+l-2}\tau(\p),\onabla_{e_j}\olapla^{s-l-1}\tau(\p) )d\p(e_j)
\}\notag,
\end{align}
where, $\olapla^{-1}=0$.
\end{thm}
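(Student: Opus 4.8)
The plan is a direct computation of the first variation, organised inductively on $s$; the case $s=1$ is Jiang's biharmonic formula recalled in \S\ref{preliminaries}. First one reduces $E_{2s}$ to a workable shape: for a map $\p$ the element $(d+d^{*})^{2}\p$ is identified with $-\tau(\p)$, and on sections of $\p^{-1}TN$ one has $(d+d^{*})^{2}=\olapla$, so that $e_{2s}(\p)=\tfrac12\|\olapla^{s-1}\tau(\p)\|^{2}$ and
\begin{equation*}
E_{2s}(\p)=\frac12\int_M\langle\olapla^{s-1}\tau(\p),\,\olapla^{s-1}\tau(\p)\rangle\,v_g .
\end{equation*}
Given a variation $F(t,p)=\p_t(p)$ with variation field $V$, write $\nabla_t$ for covariant differentiation in $F^{-1}TN$ along $\fpt$. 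Since $\nabla_t$ is metric, differentiation under the integral sign gives
\begin{equation*}
\left.\frac{d}{dt}\right|_{t=0}E_{2s}(\p_t)=\int_M\Big\langle\left.\nabla_t\,\olapla^{s-1}\tau(\p_t)\right|_{t=0},\ \olapla^{s-1}\tau(\p)\Big\rangle\,v_g .
\end{equation*}

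Next I would push $\nabla_t$ inward. Two commutation facts do the work. The first is the classical variation of the tension field,
\begin{equation*}
\left.\nabla_t\tau(\p_t)\right|_{t=0}=-\olapla V+\sum_j R^N(V,d\p(e_j))\,d\p(e_j) ,
\end{equation*}
obtained by differentiating $\tau(\p_t)=\operatorname{tr}\wnabla d\p_t$ and commuting $\nabla_t$ past the spatial connection, the defect being the curvature $R^{F}(\fpt,e_j)=R^N(dF(\fpt),dF(e_j))$. The second is that $(\nabla_t\olapla-\olapla\nabla_t)|_{t=0}=\mathcal{R}_V$ is a \emph{first-order} operator on $\Gamma(\p^{-1}TN)$ whose coefficients are built algebraically from $V$, $d\p$, $R^N$ and their first covariant derivatives; this again comes from the Ricci identity for $[\nabla_t,\onabla_{e_j}]$, applied twice. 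Iterating the second identity yields
\begin{equation*}
\left.\nabla_t\,\olapla^{s-1}\tau(\p_t)\right|_{t=0}=\olapla^{s-1}\big(\left.\nabla_t\tau(\p_t)\right|_{t=0}\big)+\sum_{a=0}^{s-2}\olapla^{a}\,\mathcal{R}_V\big(\olapla^{\,s-2-a}\tau(\p)\big) .
\end{equation*}

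Finally, since $M$ is compact and $\olapla=\onabla^{*}\onabla$ is formally self-adjoint on $\Gamma(\p^{-1}TN)$, I would integrate by parts to move all powers of $\olapla$ onto the remaining factor. The main part becomes $\int_M\langle\,\nabla_t\tau|_{t=0}\,,\,\olapla^{2s-2}\tau(\p)\,\rangle\,v_g$; inserting the first identity and using the pair symmetry $\langle R^N(X,Y)Z,W\rangle=\langle R^N(Z,W)X,Y\rangle$ together with skew-symmetry produces exactly $-\int_M\langle\,\olapla^{2s-1}\tau(\p)-R^N(\olapla^{2s-2}\tau(\p),d\p(e_j))d\p(e_j)\,,\,V\,\rangle\,v_g$, the first two terms of $\tau_{2s}(\p)$. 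For the remaining sum, after expanding $\mathcal{R}_V$, using $\sum_k\onabla_{e_k}d\p(e_k)=\tau(\p)$, integrating by parts again in the $M$-directions, and applying the same symmetries of $R^N$, the $\onabla R^N$ and $\onabla V$ contributions cancel in pairs and the surviving terms — with indices tied together by $l=a+1$ — regroup precisely into the double sum of \eqref{2s-harmonic eq}, paired against $V$. Reading off the coefficient of $V$ in $-\int_M\langle\,\cdot\,,V\rangle\,v_g$ gives the stated formula.

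The last reorganisation is the main obstacle: one must verify that the $O(s)$ curvature terms generated by the iterated commutator $\mathcal{R}_V$ — which a priori carry $\onabla R^N$, first derivatives of $V$, and all intermediate powers of $\olapla$ — collapse, after integration by parts, into the symmetric double sum, with no $\onabla R^N$ and no derivative of $V$ surviving. Running the whole argument by induction on $s$ keeps this under control: assuming the formula for $s-1$ and peeling off one outer copy of $\olapla$, the only genuinely new terms are the $l=s-1$ summand plus corrections that are absorbed by the inductive hypothesis.
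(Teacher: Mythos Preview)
The paper does not prove this theorem: it is quoted from Wang Shaobo \cite{ws1} and stated without argument, so there is no ``paper's own proof'' to compare your attempt against.

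That said, your outline is the standard route and is essentially how the result is obtained in \cite{ws1}: rewrite $E_{2s}(\p)=\tfrac12\int_M\|\olapla^{s-1}\tau(\p)\|^2\,v_g$, differentiate, commute $\nabla_t$ past each $\olapla$ using the Ricci identity, feed in the known formula for $\nabla_t\tau(\p_t)|_{t=0}$, and integrate by parts using self-adjointness of $\olapla$ together with the symmetries of $R^N$. Your identification of the two commutation inputs and of the self-adjointness step is correct, and the induction-on-$s$ strategy you propose at the end is the clean way to organise the bookkeeping.

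Where your write-up falls short of a proof is exactly the point you flag yourself: the passage from the iterated commutator terms $\sum_{a}\olapla^{a}\mathcal{R}_V(\olapla^{s-2-a}\tau(\p))$ to the double sum in \eqref{2s-harmonic eq} is asserted rather than carried out. In particular you should exhibit $\mathcal{R}_V$ explicitly --- it is $\mathcal{R}_V(W)=-\sum_j\{R^N(V,d\p(e_j))\onabla_{e_j}W+\onabla_{e_j}(R^N(V,d\p(e_j))W)\}$ up to sign --- and then show concretely, via one integration by parts and the first Bianchi identity, how the $\onabla V$ and $\onabla R^N$ pieces cancel and how the index $a$ becomes the summation index $l$. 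Without that computation displayed the argument is a plausible sketch, not a proof.
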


\vspace{20pt}

\begin{thm}[\cite{ws1}]\label{2s+1-harmonic}
Let $k=2s+1\ \ \ (s=0,1,2,\cdots),$
\begin{align*}
\left.\frac{d}{dt}\right|_{t=0}E_{2s+1}(\p_t)=-\int_M\langle \tau_{2s+1}(\p), V\rangle,
\end{align*}
where, 
\begin{align}\label{2s+1-harmonic eq}
\tau_{2s+1}(\p)
=&\olapla^{2s}\tau(\p)
-R^N(\olapla^{2s-1}\tau(\p),d\p(e_j))d\p(e_j)\notag\\
&-\sum^{s-1}_{l=1}
\{
R^N(\onabla_{e_j}\olapla^{s+l-1}\tau(\p),\olapla^{s-l-1}\tau(\p) )d\p(e_j) \\
&\hspace{32pt}-R^N(\olapla^{s+l-1}\tau(\p),\onabla_{e_j}\olapla^{s-l-1}\tau(\p) )d\p(e_j)
\}\notag\\
&-R^N(\onabla_{e_i}\olapla^{s-1}\tau(\p),\olapla^{s-1}\tau(\p))d\p(e_i),\notag
\end{align}
where, $\olapla^{-1}=0.$
\end{thm}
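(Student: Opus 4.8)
The plan is to carry out the first variation of $E_{2s+1}$ directly, paralleling the even case of Theorem \ref{2s-harmonic} but keeping track of the one extra covariant derivative that distinguishes the odd case. First I would record the standard reduction of the functional (cf. \cite{sm1}, \cite{ws1}): since $(d+d^*)\p=d\p$ and, up to an overall sign, $(d+d^*)^{2s}\p=\olapla^{s-1}\tau(\p)$ as a section of $\p^{-1}TN$, one has $(d+d^*)^{2s+1}\p=d\bigl(\olapla^{s-1}\tau(\p)\bigr)=\onabla\olapla^{s-1}\tau(\p)$, a $\p^{-1}TN$-valued $1$-form, whence
\begin{align*}
E_{2s+1}(\p)=\frac12\int_M\bigl\|\onabla\olapla^{s-1}\tau(\p)\bigr\|^2v_g=\frac12\int_M\bigl\langle \olapla^{s}\tau(\p),\olapla^{s-1}\tau(\p)\bigr\rangle v_g,
\end{align*}
the last equality by the definition of the rough Laplacian $\olapla=\onabla^*\onabla$ together with integration by parts on the compact manifold $M$.

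Next I would differentiate along a variation $F(t,p)=\p_t(p)$ with variation field $V\in\Gamma(\p^{-1}TN)$. Writing $\nabla_t$ for the pulled-back connection in the $\fpt$-direction, we get
\begin{align*}
\left.\frac{d}{dt}\right|_{t=0}E_{2s+1}(\p_t)=\int_M\Bigl\langle \nabla_t\onabla\olapla^{s-1}\tau(\p_t)\big|_{t=0},\ \onabla\olapla^{s-1}\tau(\p)\Bigr\rangle v_g .
\end{align*}
The core of the argument is to commute $\nabla_t$ all the way inward to the factor $\tau(\p_t)$: each time $\nabla_t$ passes a covariant derivative $\onabla_{e_j}$ it leaves a curvature term $R^N(dF(\fpt),d\p(e_j))(\cdot)=R^N(V,d\p(e_j))(\cdot)$ at $t=0$, so passing it through $\olapla=-\sum_j(\onabla_{e_j}\onabla_{e_j}-\onabla_{\nabla_{e_j}e_j})$ produces two such terms, and through the outermost $\onabla$ exactly one more; finally one invokes the standard first-variation formula for the tension field, $\nabla_t\tau(\p_t)|_{t=0}=-\olapla V-R^N(V,d\p(e_j))d\p(e_j)$ (the case $s=1$ recovers Jiang's biharmonic equation quoted in $\S\ref{preliminaries}$). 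After this the integrand is a sum of a principal part, in which $\olapla^{2s}$ acts on $V$, and a collection of curvature terms in which the total number of $\olapla$'s and $\onabla$'s is $2s-1$, distributed in every possible way between the two slots of $R^N$.

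Finally I would integrate by parts to move every $\olapla$ and $\onabla$ off $V$, using self-adjointness of $\olapla$ and the Leibniz identity $\partial_X\langle\alpha,\beta\rangle=\langle\onabla_X\alpha,\beta\rangle+\langle\alpha,\onabla_X\beta\rangle$, so that each term is displayed in the form $\langle(\text{tensor built from }\tau(\p)),V\rangle$; collecting everything gives $-\int_M\langle\tau_{2s+1}(\p),V\rangle v_g$. The principal part yields $\olapla^{2s}\tau(\p)-R^N(\olapla^{2s-1}\tau(\p),d\p(e_j))d\p(e_j)$; the curvature terms generated by commuting $\nabla_t$ across the interior copies of $\olapla$ reorganize, by a telescoping that I expect to require care, into the sum $\sum_{l=1}^{s-1}\{R^N(\onabla_{e_j}\olapla^{s+l-1}\tau(\p),\olapla^{s-l-1}\tau(\p))d\p(e_j)-R^N(\olapla^{s+l-1}\tau(\p),\onabla_{e_j}\olapla^{s-l-1}\tau(\p))d\p(e_j)\}$; and the single extra $\onabla$ present here but absent in the even case contributes, after integration by parts and use of the pair symmetry of $R^N$, precisely the last summand $R^N(\onabla_{e_i}\olapla^{s-1}\tau(\p),\olapla^{s-1}\tau(\p))d\p(e_i)$. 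The main obstacle is exactly this bookkeeping: matching the index ranges and signs of the telescoped curvature sum to the stated formula, and confirming that the ``middle'' term coming from the outermost $\onabla$ survives with the right coefficient rather than cancelling. An induction on $s$, with the structure of the formula in Theorem \ref{2s-harmonic} feeding the inductive step, is the natural device for making the telescoping rigorous.
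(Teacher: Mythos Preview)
The paper does not prove this theorem at all: both Theorem~\ref{2s-harmonic} and Theorem~\ref{2s+1-harmonic} are simply quoted from \cite{ws1} and used as black boxes in the later sections. So there is no ``paper's own proof'' to compare against.

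That said, your outline is the standard derivation and is essentially what Wang's original argument does. The reduction $(d+d^*)^{2s+1}\p=\onabla\,\olapla^{\,s-1}\tau(\p)$ (up to a global sign irrelevant for $E_{2s+1}$) is correct, and the strategy of commuting $\nabla_t$ inward, picking up one $R^N(V,d\p(e_j))$ per $\onabla_{e_j}$, then integrating by parts, is exactly right. Two small points worth tightening: (i) check the sign in your variation formula for the tension field---with the curvature convention of this paper one has $\nabla_t\tau(\p_t)\big|_{t=0}=-\olapla V+R^N(V,d\p(e_j))d\p(e_j)$, and a slip here will propagate through every curvature term; (ii) your description of the ``telescoping'' is accurate in spirit, but the clean way to organise it is not induction on $s$ but rather to write, for each interior $\olapla$ that $\nabla_t$ crosses, the resulting curvature term, integrate by parts immediately to shift all derivatives onto the $\tau$-side, and then observe that consecutive terms combine pairwise via the first Bianchi/pair symmetry of $R^N$ into the displayed $\onabla_{e_j}\olapla^{s+l-1}$ versus $\onabla_{e_j}\olapla^{s-l-1}$ pattern. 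The lone unpaired term is, as you say, exactly the contribution of the outermost $\onabla$, and it survives with coefficient $1$ because there is only one such derivative rather than two.
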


\vspace{20pt}

\section{$k$-harmonic isometric immersion}\label{immersion}

In this section we consider $k$-harmonic isometric immersion $\p: (M,g)\rightarrow (N,h)$ from an $m$-dimensional compact Riemannian manifold into an $n$-dimensional Riemannian manifold. And in this section, we denote 
the second fundamental form $B(e_i,e_j)$ by $B_{ij}$, where $\{e_i\}_{i=1}^m$ is locally defined  orthonormal frame field on $M$.
 
G. Y. Jiang \cite{jg1} showed the following.

\begin{thm}[\cite{jg1}]\label{bi sphere}
Let $\p:M\rightarrow S^{m+1}$ be an isometric immersion having parallel mean curvature vector field with non-zero mean curvature.
 Then, the necessary and sufficient condition for $\p$ to be biharmonic is $||B||^2=m={\rm dim}M$.
\end{thm}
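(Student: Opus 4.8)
The plan is to specialize Jiang's biharmonic equation, stated just above as $\olapla\tau(\p)-R^N(\tau(\p),d\p(e_i))d\p(e_i)=0$, to the case $N=S^{m+1}$ and $\p$ an isometric immersion of the hypersurface $M$. First I would fix notation: since $\p$ is an isometric immersion into a space form of curvature $1$, the tension field is $\tau(\p)=mH$, where $H$ is the mean curvature vector field (the trace of the second fundamental form $B$ divided by $m$, viewed in $\p^{-1}TS^{m+1}$). Let $\xi$ be a (local) unit normal and write $H=f\xi$ where $f$ is the mean curvature function; the hypothesis "parallel mean curvature vector field with non-zero mean curvature" means $\onabla H\equiv 0$ and $f\neq 0$ (in particular $f$ is a non-zero constant).

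The key computation is to evaluate the two terms of Jiang's equation. For the curvature term, I would use that $S^{m+1}$ has constant curvature $1$, so $R^N(X,Y)Z=\langle Y,Z\rangle X-\langle X,Z\rangle Y$; hence
\begin{align*}
R^N(\tau(\p),d\p(e_i))d\p(e_i)&=\sum_i\bigl(\langle d\p(e_i),d\p(e_i)\rangle\,\tau(\p)-\langle\tau(\p),d\p(e_i)\rangle\,d\p(e_i)\bigr)\\
&=m\,\tau(\p)=m^2H,
\end{align*}
using that $\p$ is isometric (so $\langle d\p(e_i),d\p(e_i)\rangle=1$) and that $\tau(\p)=mH$ is normal, hence orthogonal to each $d\p(e_i)$. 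For the rough-Laplacian term, since $\onabla H=0$ we get $\olapla\tau(\p)=m\,\onabla^*\onabla H$, but one must be careful: $\onabla H=0$ does \emph{not} immediately give $\onabla^*\onabla H=0$ because $\onabla$ here is the connection in $\p^{-1}TN$, and $H$ parallel in the \emph{normal bundle} is not the same as $H$ parallel in $\p^{-1}TN$. The standard move is the Weingarten/Gauss decomposition: for $X\in\Gamma(TM)$, $\onabla_X H=-A_H X+\nabla^\perp_X H$ where $A_H$ is the shape operator associated to $H$ and $\nabla^\perp$ the normal connection. Parallel mean curvature means $\nabla^\perp H=0$, so $\onabla_X H=-A_H X=-f\,A_\xi X=-f\,SX$ where $S$ is the shape operator of $\xi$. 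Then one computes $\onabla^*\onabla H=-\sum_i\bigl(\onabla_{e_i}\onabla_{e_i}H-\onabla_{\nabla_{e_i}e_i}H\bigr)$, plugging in $\onabla_{e_i}H=-f\,Se_i=-f\sum_j S_{ij}e_j$ and differentiating once more; the tangential part of the second derivative contributes (after using Codazzi, $\nabla^\perp H=0$) a term proportional to $\nabla f$ which vanishes, and the normal part produces $f\,\|S\|^2\,\xi=\|B\|^2\,H$ (here $\|B\|^2=f^2\|S\|^2$... wait, more precisely $\|B\|^2=\sum_{i,j}S_{ij}^2$ since $B_{ij}=S_{ij}\xi$, and $A_H=fS$, so the normal contribution is $\bigl(\sum_{i,j}S_{ij}^2\bigr)H=\|B\|^2 H$). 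Hence $\olapla\tau(\p)=m\|B\|^2 H$.

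Substituting both pieces into Jiang's equation gives $m\|B\|^2 H-m^2 H=0$, i.e. $m\bigl(\|B\|^2-m\bigr)H=0$. Since $H\neq0$ and $m\neq0$, this is equivalent to $\|B\|^2=m$, which is the claimed necessary and sufficient condition. I expect the main obstacle to be the careful bookkeeping in the rough-Laplacian term: one must correctly separate tangential and normal components of $\onabla_{e_i}\onabla_{e_i}H$, correctly invoke the Codazzi equation and the hypothesis $\nabla^\perp H=0$ to kill the tangential ($\nabla f$-type) terms and any curvature cross-terms, and keep track of the fact that although $H$ is normal-parallel it is not $\onabla$-parallel. A secondary subtlety is confirming that "parallel mean curvature vector field" forces $f=\|H\|$ to be constant on $M$ (so that no $\lapla f$ or $\nabla f$ survives); this follows since $0=\nabla^\perp H=\nabla^\perp(f\xi)=(df)\xi+f\nabla^\perp\xi=(df)\xi$ as $\xi$ is unit and its normal connection is zero in the hypersurface case, giving $df=0$. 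Once these points are handled, the rest is the short algebraic substitution above.
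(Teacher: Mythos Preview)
The paper does not actually prove this theorem: it is quoted from Jiang \cite{jg1} and stated without proof, as motivation for the $k$-harmonic generalization in Proposition~\ref{to sphere}. So there is no proof in the paper to compare against directly.

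That said, your argument is correct and is the standard one. The curvature term is computed exactly right, and your handling of the rough Laplacian---decomposing $\onabla_X H$ via the Weingarten formula, then using Codazzi together with constancy of $f=\|H\|$ to kill the tangential divergence $\sum_i(\nabla_{e_i}S)e_i$---is the clean way to obtain $\olapla\tau(\p)=m\|B\|^2H$. (Concretely, $\langle\sum_i(\nabla_{e_i}S)e_i,e_j\rangle=\mathrm{tr}(\nabla_{e_j}S)=\nabla_{e_j}(\mathrm{tr}\,S)=m\,e_j(f)=0$.) The momentary confusion about $\|B\|^2$ versus $f^2\|S\|^2$ is harmless once you note $B_{ij}=S_{ij}\xi$, so $\|B\|^2=\|S\|^2$.

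For comparison, the paper's own machinery (Lemmas~\ref{ljg1}--\ref{olapla^k B} and Proposition~\ref{to sphere}) does recover the $k=2$ conclusion, but under the \emph{stronger} hypothesis that $B$ itself is parallel, not merely $H$. Its route is also different: rather than Weingarten/Codazzi on $H$, it establishes a closed formula for $\olapla^k B_{st}$ in terms of iterated inner products of $B$, from which $\olapla\tau(\p)=\|B\|^2\tau(\p)$ drops out. Your direct approach is more elementary and works under the weaker hypothesis actually stated in the theorem; the paper's lemma-based approach is heavier but is what is needed for the iterated $\olapla^k$ appearing in the $k$-harmonic equations.
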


We consider the case of $k$-harmonic isometric immersion. First we show the next lemmas.

\begin{lem}\label{ljg1}
Let $\p:(M,g)\rightarrow (N,h)$ be an isometric immersion of which the second fundamental form $B$ is parallel,
 i.e., $\nabla^{\perp}B=0,$ where $\nabla^{\perp}$ is the induced connection of the normal bundle $T^{\perp}M$ by $\p$.
Then, 
\begin{align*}
\olapla B_{st}
=h(\olapla B_{st},d\p(e_i))d\p(e_i)
-h(\onabla_{e_i}B_{st},d\p(e_j))B_{ij}.
\end{align*}
\end{lem}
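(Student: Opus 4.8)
The plan is to work at an arbitrary point $p\in M$ and to split $\p^{-1}TN$, over a neighbourhood of $p$, into the orthogonal sum $d\p(TM)\oplus T^{\perp}M$, tracking the tangential and normal components of $\olapla B_{st}$ separately. Because $B_{st}$ is a normal section (the Gauss formula for the isometric immersion $\p$ makes $B(X,Y)$ normal to $d\p(TM)$), the orthogonal projection of $\olapla B_{st}$ onto $d\p(TM)$ is, by the very definition of that projection, $\sum_i h(\olapla B_{st},d\p(e_i))\,d\p(e_i)$, i.e.\ exactly the first term on the right-hand side. Hence the content of the lemma is to identify the normal component of $\olapla B_{st}$ with $-\sum_{i,j}h(\onabla_{e_i}B_{st},d\p(e_j))\,B_{ij}$. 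I would first note the elementary identity $h(\onabla_{e_i}B_{st},d\p(e_j))=-h(B_{st},B_{ij})$, a consequence of metric compatibility of $\onabla$, of $h(B_{st},d\p(e_j))=0$, and of the Gauss formula $\onabla_{e_i}d\p(e_j)=B_{ij}+d\p(\nabla_{e_i}e_j)$ together with $B_{st}\perp d\p(TM)$; so the target is to show that the normal part of $\olapla B_{st}$ is $\sum_{i,j}h(B_{st},B_{ij})\,B_{ij}$.

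The key structural step exploits the hypothesis $\nabla^{\perp}B=0$. Viewing $B=\wnabla d\p$ as a section of $\odot^2T^*M\otimes\p^{-1}TN$ and writing $\wnabla B$ for its covariant derivative (so that $\olapla$ acts on $B$ through the connection of this bundle), I would show that $\wnabla B$ is everywhere tangent to $d\p(TM)$: the normal component of $(\wnabla_X B)(Y,Z)$ equals $(\nabla^{\perp}_X B)(Y,Z)$, which vanishes by hypothesis, whereas the same short computation as above gives the tangential component as $-\sum_j h(B(Y,Z),B(X,e_j))\,d\p(e_j)$. In particular $(\wnabla_{e_i}B)(e_s,e_t)=-\sum_j h(B_{st},B_{ij})\,d\p(e_j)$, a purely tangential expression whose coefficients $h(B_{st},B_{ij})$ are smooth functions.

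Now fix $p$ and pick a local orthonormal frame $\{e_i\}$ with $\nabla_{e_j}e_i|_p=0$ for all $i,j$. Since every term in the rough Laplacian in which a covariant derivative lands on a frame field vanishes at $p$, one has $\olapla B_{st}\big|_p=-\sum_i\onabla_{e_i}\big((\wnabla_{e_i}B)(e_s,e_t)\big)\big|_p$. Inserting the formula from the previous step and carrying out $\onabla_{e_i}$, each derivative either falls on a coefficient function $h(B_{st},B_{ij})$ — then the resulting term is still a combination of the $d\p(e_j)$, hence tangent to $d\p(TM)$, and by uniqueness of the orthogonal decomposition these terms must add up to $\sum_i h(\olapla B_{st},d\p(e_i))\,d\p(e_i)$ — or it falls on $d\p(e_j)$, producing $\onabla_{e_i}d\p(e_j)|_p=B_{ij}$ and therefore the normal contribution $\sum_{i,j}h(B_{st},B_{ij})\,B_{ij}$. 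Reading off the normal component and rewriting $h(B_{st},B_{ij})=-h(\onabla_{e_i}B_{st},d\p(e_j))$ gives the asserted identity at $p$, and $p$ was arbitrary.

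The step I expect to be the main obstacle is the tangential/normal bookkeeping through the two covariant differentiations: one must treat $B$ as a tensor, so that no curvature terms coming from the non-parallelism of the chosen frame intrude, and then keep careful track, at each differentiation, of which pieces land in $d\p(TM)$ and which in $T^{\perp}M$. The hypothesis $\nabla^{\perp}B=0$ is exactly what makes this feasible: it forces $\wnabla B$ to be tangential, which is why the second derivative splits into precisely the two advertised terms. Without it there would be further contributions coming from $\nabla^{\perp}_{e_i}B_{st}$ and from its derivative.
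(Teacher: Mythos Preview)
Your argument is correct and rests on the same idea as the paper's: the hypothesis $\nabla^{\perp}B=0$ makes the first covariant derivative of $B_{st}$ purely tangential, and one further differentiation then produces exactly one normal contribution, namely $-\sum_{i,j}h(\onabla_{e_i}B_{st},d\p(e_j))B_{ij}$. The organisation is different, however. The paper never specialises the frame: it writes $\onabla_{e_i}B_{st}=\sum_j h(\onabla_{e_i}B_{st},d\p(e_j))\,d\p(e_j)$ as an identity of sections, differentiates this directly, keeps the $\onabla_{\nabla_{e_i}e_i}$ correction, and then uses the antisymmetry $\Gamma_{ij}^k+\Gamma_{ik}^j=0$ to show that the stray term $\sum_j h(\onabla_{e_i}B_{st},\onabla_{e_i}d\p(e_j))\,d\p(e_j)$ combines with the remaining one into $-\sum_j h(\onabla_{e_i}B_{st},d\p(e_j))\,B_{ij}$. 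You instead pass to a geodesic frame at $p$ and work with the tensorial derivative $(\wnabla_{e_i}B)(e_s,e_t)$, which makes the tangentiality transparent and lets you read off the two pieces without any Christoffel manipulation. One caution about your key step $\olapla B_{st}\big|_p=-\sum_i\onabla_{e_i}\big((\wnabla_{e_i}B)(e_s,e_t)\big)\big|_p$: the sections $\onabla_{e_i}B_{st}$ and $(\wnabla_{e_i}B)(e_s,e_t)$ differ by $B(\nabla_{e_i}e_s,e_t)+B(e_s,\nabla_{e_i}e_t)$, and while these vanish at $p$, their $e_i$-derivatives contain $B(\nabla_{e_i}\nabla_{e_i}e_s,e_t)$, which need not; the paper sidesteps this by working with $\onabla_{e_i}B_{st}$ itself throughout rather than the tensorial $\wnabla B$.
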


\begin{proof}
Let us recall the definition of $\nabla ^{\perp }$: For any section $\xi \in \Gamma (T^{\perp}M)$,
we decompose ${\onabla}_X{\xi }$ according to $TN{\mid }_M=TM{\oplus }T^{\perp }M$ as follows.

$$\onabla_X{\xi }=\nabla _{d\p (X)}^N{\xi} =\nabla _{d\p (X)}^T{\xi }+\nabla _{d\p (X)}^{\perp }\xi .$$

By the assumption, 
$\nabla _{d\p (X)}^{\perp }B_{st}=0$
for all $X\in {\Gamma}(TM)$, we have 

$$\onabla _{X}B_{st} =\nabla _{d\p (X)}^TB_{st} \in \Gamma (\p _{*}TM).$$

Thus, for all $i=1,\cdots ,m,$

$$\onabla _{e_i}B_{st} =h(\onabla _{e_i}B_{st} ,d\p ({e_j}))d\p ({e_j})$$
because $\{d\p (e_j)_x\}_{j=1}^m$ is an orthonormal basis with respect to $h$,
of $\p _*T_xM (x\in M)$. Now let us calculate
 
$$\onabla ^*\onabla B_{st} 
=-\{\onabla _{e_i}\onabla _{e_i}B_{st} -\onabla _{\nabla _{e_i}e_i}B_{st} \}.$$ 

Indeed, we have
\begin{align*}
\onabla _{e_i}\onabla _{e_i}B_{st} 
=&
h(\onabla _{e_i}\onabla _{e_i}B_{st},d\p(e_j))d\p(e_j) +h(\onabla _{e_i}B_{st},\onabla _{e_i}d\p (e_j))d\p (e_j)\\
&+
h(\onabla_{e_i}B_{st} ,d\p (e_j))\onabla_{e_i}d\p (e_i),
\end{align*}
and

$$\onabla_{\nabla _{e_i}e_i}B_{st} =h(\onabla_{\nabla _{e_i}e_i}B_{st},d\p (e_j))d\p (e_j),$$
so that we have

\begin{align}\label{21}
\onabla^*\onabla B_{st}
=&
h(\onabla^*\onabla B_{st},d\p (e_j))d\p (e_j)\\
&\hspace{10pt}-
\{h(\onabla_{e_i}B_{st} ,\onabla_{e_i}d\p (e_j))d\p (e_j)\notag\\
&\hspace{25pt}+h(\onabla_{e_i}B_{st} ,d\p (e_j))\onabla_{e_i}d\p (e_i)\}\notag.
\end{align} 

Denoting $\nabla _{e_i}e_j=\sum_{k=1}^m\Gamma _{ij}^ke_k,$ we have $\Gamma _{ij}^k+\Gamma _{ik}^j=0.$
Since $(\wnabla_{e_i}d\p )(e_j)=\onabla_{e_i}d\p (e_j)-d\p (\nabla _{e_i}e_j)$ is a local section of $T^\perp M$,
we have for the second term of the RHS of $(\ref{21})$, for each fixed $i=1,\dots,m,$

\begin{align}\label{22}
h(\onabla_{e_i}B_{st} ,\onabla_{e_i}d\p (e_j))d\p (e_j)
=&
h(\onabla_{e_i}B_{st} ,(\wnabla_{e_i}d\p )(e_j)+d\p (\nabla _{e_i}e_j))d\p (e_j)\\
=&
h(\onabla_{e_i}B_{st} ,d\p (\nabla _{e_i}e_j))d\p (e_j)\notag\\
=&
h(\onabla_{e_i}B_{st} ,d\p (e_k))d\p (\Gamma _{ij}^ke_j)\notag\\
=&
-h(\onabla_{e_i}B_{st} ,d\p (e_k))d\p (\Gamma _{ik}^je_j)\notag\\
=&
-h(\onabla_{e_i}B_{st} ,d\p (e_k))d\p (\nabla _{e_i}e_k)\notag.
\end{align}
Substituting $(\ref{22})$ into $(\ref{21})$, we obtain this lemma.
\end{proof}

\begin{lem}\label{ljg2}
Under the same assumption as in Lemma $\ref{ljg1}$, we have
\begin{align*}
\olapla B_{st}
=-h(B_{st},R^N(d\p(e_j),d\p(e_k))d\p(e_k))d\p(e_j)
+h(B_{st},B_{ij})B_{ij}.
\end{align*}
\end{lem}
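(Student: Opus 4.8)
The plan is to take the expression for $\olapla B_{st}$ from Lemma \ref{ljg1} and rewrite both terms on its right-hand side. For the first term, $h(\olapla B_{st}, d\p(e_i))d\p(e_i)$, I would substitute the very identity being proved back in — that is, use Lemma \ref{ljg1} recursively — but more efficiently I would instead commute covariant derivatives to express $\olapla B_{st}$ tangentially in terms of a curvature expression. Concretely, starting from $\onabla_{e_i} B_{st} = h(\onabla_{e_i} B_{st}, d\p(e_j))d\p(e_j)$ (established in the proof of Lemma \ref{ljg1}), I would differentiate once more and take the rough Laplacian, then feed the result into the inner product $h(\,\cdot\,, d\p(e_i))$. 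The point is that $h(\olapla B_{st}, d\p(e_i)) = -h(\onabla_{e_k}\onabla_{e_k} B_{st} - \onabla_{\nabla_{e_k}e_k} B_{st}, d\p(e_i))$, and since $B_{st}$ is normal, $h(B_{st}, d\p(e_i)) = 0$ everywhere; differentiating this scalar identity twice lets me trade the term $h(\olapla B_{st}, d\p(e_i))$ for terms involving $h(B_{st}, \olapla d\p(e_i))$ together with first-derivative cross terms. This is the standard "move the Laplacian onto the other factor using the vanishing of the pointwise pairing" trick.

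Next I would identify the geometric meaning of the pieces produced. The term $h(B_{st}, \onabla_{e_k}\onabla_{e_k} d\p(e_i) - \onabla_{\nabla_{e_k}e_k} d\p(e_i))$ — i.e. $-h(B_{st}, \olapla d\p(e_i))$ — should be handled by the Weitzenböck-type / Ricci-identity computation that relates $\olapla d\p$ to the tension field and the curvature of $N$; since $\tau(\p)$ is here a combination of the $B_{ii}$ and $d\p$ is an isometric immersion, the curvature term $R^N(d\p(e_j), d\p(e_k))d\p(e_k)$ appears exactly as in the claimed formula. Simultaneously, the first-derivative cross terms from differentiating $h(B_{st}, d\p(e_i)) = 0$ twice produce $h(\onabla_{e_k} B_{st}, d\p(e_j))$ type factors, which — using again $\onabla_{e_i} B_{st} = h(\onabla_{e_i} B_{st}, d\p(e_j))d\p(e_j)$ and the Codazzi/symmetry relations for $B$ — collapse into the term $h(B_{st}, B_{ij})B_{ij}$. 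Here I would also use the second term $-h(\onabla_{e_i} B_{st}, d\p(e_j))B_{ij}$ already present in the statement of Lemma \ref{ljg1}: after substituting $\onabla_{e_i} B_{st} = h(\onabla_{e_i} B_{st}, d\p(e_k))d\p(e_k)$ into it, one rewrites $h(\onabla_{e_i} B_{st}, d\p(e_j))$ by transferring the derivative, via $h(B_{st}, d\p(e_j)) = 0$, onto $\onabla_{e_i} d\p(e_j) = B_{ij} + d\p(\nabla_{e_i} e_j)$; the tangential part drops out against the Christoffel antisymmetry (exactly as in equation $(\ref{22})$), leaving $-h(B_{st}, B_{ij})$, and since it enters with the outer $B_{ij}$ and an overall sign, the two contributions assemble into $+h(B_{st}, B_{ij})B_{ij}$.

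So the skeleton is: (i) from Lemma \ref{ljg1}, $\olapla B_{st} = h(\olapla B_{st}, d\p(e_i))d\p(e_i) - h(\onabla_{e_i} B_{st}, d\p(e_j))B_{ij}$; (ii) rewrite $h(\olapla B_{st}, d\p(e_i))$ by differentiating $h(B_{st}, d\p(e_i)) \equiv 0$ twice, giving a curvature term $-h(B_{st}, R^N(d\p(e_j), d\p(e_k))d\p(e_k))$ contracted with $d\p(e_j)$ plus gradient cross terms; (iii) rewrite $h(\onabla_{e_i} B_{st}, d\p(e_j))$ by differentiating $h(B_{st}, d\p(e_j)) \equiv 0$ once and using $\onabla_{e_i} d\p(e_j) = B_{ij} + d\p(\nabla_{e_i}e_j)$; (iv) collect the gradient cross terms from (ii) and (iii), use $\nabla^\perp B = 0$ and the Christoffel antisymmetry $\Gamma_{ij}^k + \Gamma_{ik}^j = 0$ to kill the tangential leftovers, and recognize the surviving piece as $h(B_{st}, B_{ij})B_{ij}$. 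I expect step (ii) — bookkeeping the Ricci identity for the rough Laplacian acting on $d\p$ and extracting precisely the $R^N(d\p(e_j),d\p(e_k))d\p(e_k)$ combination with the correct sign and index placement — to be the main obstacle; everything else is the same pattern of "transfer the derivative off the normal section onto $d\p$, discard the tangential part by antisymmetry" already rehearsed in the proof of Lemma \ref{ljg1}.
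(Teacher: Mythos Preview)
Your proposal is correct and follows essentially the paper's approach: differentiate the orthogonality $h(B_{st}, d\p(e_j)) = 0$ once to handle the second term of Lemma~\ref{ljg1} and twice, combined with the Weitzenb\"ock formula for $d\p$, to handle the first. One clarification: in the paper the cross terms you anticipate in step~(ii) cancel internally, yielding the clean identity $h(\olapla B_{st}, d\p(e_j)) = -h\bigl(B_{st}, (\wnabla^*\wnabla d\p)(e_j)\bigr)$, so steps~(ii) and~(iii) are in fact independent and the contribution $h(B_{st}, B_{ij})B_{ij}$ comes entirely from~(iii), not from any combination in~(iv).
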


\begin{proof}
Since $h(B_{st} ,d\p (e_j))=0,$ differentiating it by $e_i$, we have
\begin{align*}
h(\onabla_{e_i}B_{st} ,d\p (e_j))
=&-h(B_{st} ,\onabla_{e_i}d\p (e_j))\\
=&-h(B_{st} ,(\wnabla_{e_i}d\p )(e_j)).
\end{align*}

And we have 
\begin{align*}
h(\onabla_{e_i}\onabla_{e_i}B_{st},d\p(e_j))+h(\onabla_{e_i}B_{st},\onabla_{e_i}d\p(e_j))\\
=-h(\onabla_{e_i}B_{st},B_{ij})-h(B_{st},\onabla_{e_i}B_{ij})
\end{align*}

So we have

\begin{align}\label{4-2-1}
0=&h(\onabla_{e_i}\onabla_{e_i}B_{st},d\p(e_j))+h(\onabla_{e_i}B_{st},\onabla_{e_i}d\p(e_j))\\
&+h(\onabla_{e_i}B_{st},B_{ij})+h(B_{st},\onabla_{e_i}B_{ij})\notag\\
=&h(\onabla_{e_i}\onabla_{e_i}B_{st},d\p(e_j))+h(\onabla_{e_i}B_{st},d\p(\nabla_{e_i}e_j))\notag
+h(B_{st},\onabla_{e_i}(\wnabla_{e_i}d\p)(e_j))\notag\\
=&h(\onabla_{e_i}\onabla_{e_i}B_{st},d\p(e_j))+h(\onabla_{e_i}B_{st},d\p(\nabla_{e_i}e_j))\notag\\
&+h(B_{st},(\wnabla_{e_i}\wnabla_{e_i}d\p)(e_j)
+(\wnabla_{e_i}d\p)(\nabla_{e_i}e_j)).\notag
\end{align}

Here, using $h(B_{st},d\p(\nabla_{e_i}e_j))=0$, we get

\begin{align*}
0&=h(\onabla_{e_i}B_{st},d\p(\nabla_{e_i}e_j))+h(B_{st},\onabla_{e_i}d\p(\nabla_{e_i}e_j))\\
&=h(\onabla_{e_i}B_{st},d\p(\nabla_{e_i}e_j))+h(B_{st},(\wnabla_{e_i}d\p)(\nabla_{e_i}e_j)).\\
\end{align*}

Thus, $(\ref{4-2-1})$ coincides with

\begin{align}\label{4-2-2}
h(\onabla_{e_i}\onabla_{e_i}B_{st},d\p(e_j))
+h(B_{st},(\wnabla_{e_i}\wnabla_{e_i}d\p)(e_j)=0.
\end{align}

And using $h(B_{st},d\p(e_j))=0,$ we obtain 

\begin{align}\label{4-2-3}
0=&h(\onabla_{\nabla_{e_i}e_i}B_{st},d\p(e_j))+h(B_{st},\onabla_{\nabla_{e_i}e_i}d\p(e_j))\\
=&h(\onabla_{\nabla_{e_i}e_i}B_{st},d\p(e_j))+h(B_{st},(\wnabla_{\nabla_{e_i}e_i}d\p)(e_j))\notag.
\end{align}

By $(\ref{4-2-2}),$ and $(\ref{4-2-3})$, we have
\begin{align}\label{4-3-1}
0=&-h(\onabla_{e_i}\onabla_{e_i}B_{st},d\p(e_j))
-h(B_{st},(\wnabla_{e_i}\wnabla_{e_i}d\p)(e_j)\\
&+h(\onabla_{\nabla_{e_i}e_i}B_{st},d\p(e_j))+h(B_{st},(\wnabla_{\nabla_{e_i}e_i}d\p)(e_j))\notag\\
=&h(\olapla B_{st},d\p(e_j))+h(B_{st},(\wnabla^*\wnabla d\p)(e_j)),\notag
\end{align}

where, $\wnabla^*\wnabla=-(\wnabla_{e_i}\wnabla_{e_i}-\wnabla_{\nabla_{e_i}e_i})$.

By Wizenb\"{o}ck formula 

$$\lapla d\p=\wnabla^*\wnabla d\p+Sd\p,$$

where, 
$$Sd\p(X)=-(\widetilde R(X,e_k)d\p)(e_k),$$

\begin{align*}
\widetilde R(X,Y)d\p(Z)
=&R^{\p^{-1}TN}(X,Y)d\p(Z)-d\p(R^M(X,Y)Z)\\
=&R^N(d\p(X),d\p(Y))d\p(Z)-d\p(R^M(X,Y)Z),\ \ X,Y,Z\in \Gamma (TM),
\end{align*}

and 
$$\lapla d\p(e_j)=(dd^*+d^*d)d\p(e_j)=dd^*d\p(e_j)=-d\tau(\p)(e_j)=-\onabla_{e_j}\tau(\p),$$

we obtain 

\begin{align}\label{4-3-2}
(\wnabla^*\wnabla d\p)(e_j)
=&\lapla d\p(e_j)-Sd\p(e_j)\\
=&-\onabla_{e_j}\tau(\p)+R^N(d\p(e_j),d\p(e_k))d\p(e_k)-d\p(R^M(e_j,e_k)e_k).\notag
\end{align}

Substituting $(\ref{4-3-2})$ into $(\ref{4-3-1})$, we get

\begin{align*}
h(\olapla B_{st},d\p(e_j))
=&-h(B_{st},-\onabla_{e_j}\tau(\p)+R^N(d\p(e_j),d\p(e_k))d\p(e_k)\\
&\hspace{40pt}-d\p(R^M(e_j,e_k)e_k))\\
=&-h(B_{st}, R^N(d\p(e_j),d\p(e_k))d\p(e_k)).
\end{align*}
Therefore, we obtain this lemma.
\end{proof}

\begin{lem}\label{olapla^k B}
Let $\p:(M,g)\rightarrow (N,h)$ be an isometric immersion into a Riemannian manifold with constant sectional curvature $K$, 
and second fundamental form of $\p$ is parallel, Then,
\begin{equation}
\olapla^k B_{ij}=h(B_{ij},B_{i_{1}j_{1}})\prod^{k-1}_{l=1}h(B_{i_{l}j_{l}},B_{i_{(l+1) }j_{(l+1) }})B_{i_{k}j_{k}}.
\end{equation} 
\end{lem}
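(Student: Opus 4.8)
The plan is to run an induction on $k$ that reduces everything to Lemma~\ref{ljg2}. For $k=1$: since $\p$ is an isometric immersion, $d\p(e_1),\dots,d\p(e_m)$ is an orthonormal frame of $\p_{*}TM$, so in a space of constant sectional curvature $K$
\begin{align*}
R^N(d\p(e_j),d\p(e_k))d\p(e_k)
&=K\bigl(h(d\p(e_k),d\p(e_k))\,d\p(e_j)-h(d\p(e_j),d\p(e_k))\,d\p(e_k)\bigr)\\
&=K(m-1)\,d\p(e_j),
\end{align*}
which is tangential; since $B_{st}\perp\p_{*}TM$ the curvature term in Lemma~\ref{ljg2} vanishes and we are left with $\olapla B_{ij}=h(B_{ij},B_{i_{1}j_{1}})B_{i_{1}j_{1}}$, i.e. the claimed identity for $k=1$.

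For the inductive step it is convenient to introduce the $C^{\infty}(M)$-linear bundle endomorphism $\mathcal{L}$ of $\p^{-1}TN$ given by $\mathcal{L}\xi=\sum_{p,q}h(\xi,B_{pq})B_{pq}$, which one checks is independent of the chosen orthonormal frame, hence a genuine tensor. The case $k=1$ then reads $\olapla B_{ij}=\mathcal{L}(B_{ij})$, and unwinding the powers of $\mathcal{L}$ gives precisely
\begin{align*}
\mathcal{L}^{k}(B_{ij})=h(B_{ij},B_{i_{1}j_{1}})\prod_{l=1}^{k-1}h(B_{i_{l}j_{l}},B_{i_{(l+1)}j_{(l+1)}})\,B_{i_{k}j_{k}},
\end{align*}
so the lemma is equivalent to the assertion $\olapla^{k}B_{ij}=\mathcal{L}^{k}(B_{ij})$. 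Assuming this for $k-1$ and setting $W:=\olapla^{k-1}B_{ij}=\mathcal{L}^{k-1}(B_{ij})$, the whole step reduces to proving $\olapla W=\mathcal{L}(W)$; note that by construction $W=\sum_{p,q}g_{pq}B_{pq}$, where each $g_{pq}$ is a finite sum of products of functions of the type $h(B_{\alpha\beta},B_{\gamma\delta})$.

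To prove $\olapla W=\mathcal{L}(W)$, fix $x\in M$ and take $\{e_i\}$ geodesic at $x$, so that $\nabla_{e_a}e_b(x)=0$ and $\olapla\xi(x)=-\sum_{a}\onabla_{e_a}\onabla_{e_a}\xi(x)$. Because $B$ is parallel, the Gauss--Weingarten equations give $\onabla_{e_a}B_{pq}(x)=-A_{B_{pq}}(e_a)$ (where $h(A_{\xi}X,Y)=h(B(X,Y),\xi)$), which is tangential and hence $h$-orthogonal to every normal vector; in particular $e_a\bigl(h(B_{\alpha\beta},B_{\gamma\delta})\bigr)(x)=0$ for all $a$. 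The only point needing a computation is the second derivative: using the $k=1$ case together with
\begin{align*}
\sum_{a}h(\onabla_{e_a}B_{\alpha\beta},\onabla_{e_a}B_{\gamma\delta})(x)=\sum_{a,b}h(B_{ab},B_{\alpha\beta})\,h(B_{ab},B_{\gamma\delta})=h(\olapla B_{\alpha\beta},B_{\gamma\delta}),
\end{align*}
one finds $\sum_{a}e_{a}e_{a}\bigl(h(B_{\alpha\beta},B_{\gamma\delta})\bigr)(x)=0$, and then, since all first derivatives vanish at $x$, the Leibniz rule forces $\sum_{a}e_{a}e_{a}g_{pq}(x)=0$ as well. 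Feeding $e_{a}g_{pq}(x)=0$ and $\sum_{a}e_{a}e_{a}g_{pq}(x)=0$ into $\sum_{a}\onabla_{e_a}\onabla_{e_a}(g_{pq}B_{pq})$ kills all but the term $g_{pq}\sum_{a}\onabla_{e_a}\onabla_{e_a}B_{pq}=-g_{pq}\,\olapla B_{pq}$, so $\olapla W(x)=\sum_{p,q}g_{pq}\,\olapla B_{pq}(x)=\sum_{p,q}g_{pq}\,\mathcal{L}(B_{pq})(x)=\mathcal{L}(W)(x)$, using the $k=1$ case and tensoriality of $\mathcal{L}$. As $x$ is arbitrary this proves $\olapla W=\mathcal{L}(W)$ and closes the induction.

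The main obstacle will be the vanishing $\sum_{a}e_{a}e_{a}\bigl(h(B_{\alpha\beta},B_{\gamma\delta})\bigr)=0$: it rests on the Gauss-equation identity above and on matching $\sum_{a,b}h(B_{ab},B_{\alpha\beta})h(B_{ab},B_{\gamma\delta})$ with $h(\olapla B_{\alpha\beta},B_{\gamma\delta})$ through the $k=1$ formula. One must also take care that ``$B$ is parallel'' is used in its proper tensorial sense, so that the Gauss--Weingarten identifications are legitimate once the frame is chosen geodesic at the point, and that the hypothesis really is constant \emph{sectional} curvature, which is exactly what makes $R^{N}(d\p(e_j),d\p(e_k))d\p(e_k)$ tangential in the base case.
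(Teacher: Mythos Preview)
Your proof is correct and follows essentially the same route as the paper: both obtain the base case $\olapla B_{st}=h(B_{st},B_{ij})B_{ij}$ from Lemma~\ref{ljg2} by using that in constant sectional curvature $R^N(d\p(e_j),d\p(e_k))d\p(e_k)$ is tangential, and both then induct by arguing that the scalar coefficients $h(B_{\alpha\beta},B_{\gamma\delta})$ pass through $\olapla$.

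The only notable difference is in how that last point is justified. The paper simply asserts
\[
\onabla_X\{h(B_{st},B_{ij})B_{ij}\}=h(B_{st},B_{ij})\onabla_X B_{ij}
\]
and then ``similarly'' for $\onabla_X\onabla_X$, implicitly using that $B$ is parallel so the scalar components are covariantly constant; it never isolates a second-derivative term. Your pointwise argument at a geodesic frame is more explicit: you first check that $e_a\bigl(h(B_{\alpha\beta},B_{\gamma\delta})\bigr)(x)=0$, and then carry out the extra computation matching $\sum_{a,b}h(B_{ab},B_{\alpha\beta})h(B_{ab},B_{\gamma\delta})$ with $h(\olapla B_{\alpha\beta},B_{\gamma\delta})$ via the $k=1$ case to kill $\sum_a e_ae_a\bigl(h(B_{\alpha\beta},B_{\gamma\delta})\bigr)$. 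This buys you a fully rigorous, self-contained induction step (and incidentally clarifies exactly where ``parallel $B$'' and ``constant curvature'' enter) at the cost of a little extra bookkeeping; the paper's version is shorter but leaves more to the reader.
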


\begin{proof}
Since the curvature tensor $R^N$  of $(N,h)$ is given by 
$$R^N(U,V)W=K\{h(V,W)U-h(W,U)V\},\ \ \ U,V,W\in \Gamma(TN),$$
$R^N(d\p(e_j),d\p(e_k))d\p(e_k)$ is tangent to $\p_*TM$. By Lemma \ref{ljg2}, we have
\begin{align}
\olapla B_{st}=h(B_{st},B_{ij})B_{ij}
\end{align}


For all $X\in \Gamma(TM)$, we have

\begin{align*}
\onabla_X\{h(B_{st},B_{ij})B_{ij}\}
=&(\onabla_X h(B_{st},B_{ij}))B_{ij}
+h(B_{st},B_{ij})\onabla_X B_{ij}\\
=&h(B_{st},B_{ij})\onabla_X B_{ij}.
\end{align*}

Similarly we have

$$\onabla_X\onabla_X\{h(B_{st},B_{ij})B_{ij}\}=h(B_{st},B_{ij})\onabla_X\onabla_X B_{ij}.$$

Thus, we obtain

$$\olapla^2B_{st}=h(B_{st},B_{ij})\olapla B_{ij}
=h(B_{st},B_{ij})h(B_{ij},B_{kl})B_{kl}.$$

Repeating these steps, we get

\begin{equation}
\olapla^k B_{ij}=h(B_{ij},B_{i_{1}j_{1}})\prod^{k-1}_{l=1}h(B_{i_{l}j_{l}},B_{i_{(l+1) }j_{(l+1) }})B_{i_{k}j_{k}}.
\end{equation} 

Therefore, we obtain the lemma.

\end{proof}

Using these lemmas, we show the next proposition.

\begin{prop}\label{to sphere}
Let $\p:M\rightarrow S^{m+1}$ be an isometric immersion having parallel second fundamental form.
 Then, the necessary and sufficient condition for $\p$ to be $k$-harmonic is 
$$||B||^4-m||B||^2-(k-2)||\tau(\p)||^2=0.$$
\end{prop}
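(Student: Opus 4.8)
The plan is to compute the $k$-tension field $\tau_k(\p)$ appearing in Theorems~\ref{2s-harmonic} and \ref{2s+1-harmonic} directly and to show that it is a scalar multiple of $\tau(\p)$; since $M$ is compact, $\p$ is $k$-harmonic if and only if $\tau_k(\p)=0$, so the statement then follows by identifying that scalar. For the set-up, observe that an isometric immersion $\p:M\to S^{m+1}$ has a line normal bundle, so writing $B_{ij}=b_{ij}\nu$ for a local unit normal $\nu$ one has $\tau(\p)=\bigl(\sum_i b_{ii}\bigr)\nu$ and $\|B\|^2=\sum_{i,j}b_{ij}^2$; parallelism of $B$ forces $\|B\|^2$ to be a constant function on $M$ and $\tau(\p)$ to be a parallel section of the normal bundle. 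Also $S^{m+1}$ has constant sectional curvature $1$, so $R^N(U,V)W=h(V,W)U-h(W,U)V$.

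Next I would record three ingredients. (i) Because $R^N(d\p(e_j),d\p(e_k))d\p(e_k)$ lies in $\p_*TM$, it is orthogonal to every $B_{st}$, so Lemma~\ref{ljg2} collapses to $\olapla B_{st}=h(B_{st},B_{ij})B_{ij}=\|B\|^2B_{st}$ (the last step using the line-bundle description, or one may simply contract Lemma~\ref{olapla^k B}); iterating and using constancy of $\|B\|^2$ gives $\olapla^{\,l}\tau(\p)=\|B\|^{2l}\tau(\p)$ for all $l\ge0$. (ii) Since $\tau(\p)$ is normal and $K=1$, $R^N(\tau(\p),d\p(e_j))d\p(e_j)=m\,\tau(\p)$. (iii) As $\tau(\p)$ is a parallel normal section, $\onabla_{e_j}\tau(\p)$ is tangent to $\p_*TM$; differentiating $h(\tau(\p),d\p(e_j))=0$ by $e_j$ and summing gives $\sum_j h(\onabla_{e_j}\tau(\p),d\p(e_j))=-\|\tau(\p)\|^2$. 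Combining (iii) with $K=1$ yields, for all $a,b\ge0$,
\begin{align*}
R^N(\onabla_{e_j}\olapla^{\,a}\tau(\p),\olapla^{\,b}\tau(\p))d\p(e_j)&=\|B\|^{2a+2b}\|\tau(\p)\|^2\,\tau(\p),\\
R^N(\olapla^{\,a}\tau(\p),\onabla_{e_j}\olapla^{\,b}\tau(\p))d\p(e_j)&=-\|B\|^{2a+2b}\|\tau(\p)\|^2\,\tau(\p).
\end{align*}

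I would then substitute these into Theorems~\ref{2s-harmonic} and \ref{2s+1-harmonic}. In the case $k=2s$ the exponent $a+b$ occurring in the $l$-th summand of the sum over $l$ equals $2s-3$ regardless of $l$, so each of the $s-1$ summands contributes $2\|B\|^{4s-6}\|\tau(\p)\|^2\tau(\p)$; in the case $k=2s+1$ that exponent is $2s-2$, and there is in addition the term $R^N(\onabla_{e_i}\olapla^{s-1}\tau(\p),\olapla^{s-1}\tau(\p))d\p(e_i)=\|B\|^{4s-4}\|\tau(\p)\|^2\tau(\p)$. Using (i) and (ii) for the first two terms of $\tau_k(\p)$ and collecting, I expect both parities to give the single identity
\[
\tau_k(\p)=\|B\|^{2(k-3)}\bigl(\|B\|^4-m\|B\|^2-(k-2)\|\tau(\p)\|^2\bigr)\,\tau(\p)\qquad(k\ge3),
\]
while for $k=2$ the same expression without the $\|B\|$ prefactor is exactly Jiang's equation in Theorem~\ref{bi sphere}. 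Since the constant $\|B\|^{2(k-3)}$ is positive unless $B\equiv0$ (and if $B\equiv0$ then $\tau(\p)=0$ and both $k$-harmonicity and the displayed equation hold trivially), $\p$ is $k$-harmonic if and only if $\|B\|^4-m\|B\|^2-(k-2)\|\tau(\p)\|^2=0$.

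The hard part will be the bookkeeping in the last step: one must verify that the power of $\|B\|^2$ in the $l$-th summand of $\tau_k(\p)$ does not depend on $l$ (so the sum merely multiplies by $s-1$), carry the even and odd cases in parallel, and check that the extra curvature term present only when $k$ is odd is precisely what makes the coefficient of $\|\tau(\p)\|^2$ equal $k-2$ in both parities. Everything else reduces to the two curvature identities of (ii)--(iii) together with the constancy of $\|B\|^2$, which are what force $\tau_k(\p)$ to be a scalar multiple of $\tau(\p)$.
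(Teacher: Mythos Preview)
Your proposal is correct and follows essentially the same route as the paper: both compute $\olapla^{\,l}\tau(\p)=\|B\|^{2l}\tau(\p)$ from Lemma~\ref{ljg2}/Lemma~\ref{olapla^k B} and the codimension-one identity $h(B_{st},B_{ij})B_{ij}=\|B\|^2 B_{st}$, then substitute into the formulas of Theorems~\ref{2s-harmonic} and \ref{2s+1-harmonic} using the constant-curvature expression for $R^N$ to obtain $\tau_k(\p)=\|B\|^{2(k-3)}\bigl(\|B\|^4-m\|B\|^2-(k-2)\|\tau(\p)\|^2\bigr)\tau(\p)$. Your explicit isolation of the identities (ii) and (iii) and your handling of the degenerate case $B\equiv0$ are slightly more careful than the paper's write-up, but the argument is the same.
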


\begin{proof}
If we denote by $\xi$ the unit normal vector field to $\p(M)$, the second fundamental form $B$ 
 is of the form $B(e_i,e_j)=(\widetilde \nabla _{e_i}d\p)(e_j)=h_{ij}\xi$. Then, we have 
 $\tau(\p)=B(e_i,e_i)=h_{ii}\xi$ and $||B||^2=h_{ij}h_{ij}.$ Using Lemma $\ref{olapla^k B}$, we get 
 $\olapla^k \tau(\p)=||B||^{2k}\tau(\p)$. So we have
 
\begin{align*}
0=\tau_{2s}(\p)
=&\olapla^{2s-1}\tau(\p)-\{m\olapla^{2s-2}\tau(\p)\}\\
&-\sum^{s-1}_{l=1}\{(-\langle d\p(e_i),\onabla_{e_i}\olapla^{s+l-2}\tau(\p)\rangle \olapla^{s-l-1}\tau(\p))\\
&\hspace{30pt}-(\langle \onabla_{e_i}\olapla^{s-l-1}\tau(\p),d\p(e_i) \rangle \olapla^{s+l-2}\tau(\p) )\}\\
=&\olapla^{2s-1}\tau(\p)-m\olapla^{2s-2}\tau(\p)\\
&-\sum^{s-1}_{l=1}\{\langle \tau(\p),\olapla^{s+l-2}\tau(\p)\rangle \olapla^{s-l-1}\tau(\p)
+\langle \olapla^{s-l-1}\tau(\p),\tau(\p) \rangle \olapla^{s+l-2}\tau(\p) \}\\
=&||B||^{2(2s-1)}\tau(\p)-m||B||^{2(2s-2)}\tau(\p)\\
&-\sum^{s-1}_{l=1} \{ ||B||^{2(2s-3)}|| \tau(\p)||^2 \tau(\p)+||B||^{2(2s-3)}||\tau(\p)||^2\tau(\p)\}\\
=&||B||^{4s-2}\tau(\p)-m||B||^{4s-4}\tau(\p)-2(s-1)||B||^{4s-6}||\tau(\p)||^2\tau(\p)\\
=&||B||^{4s-6}\tau(\p)\{||B||^4-m||B||^2-2(s-1)||\tau(\p)||^2\}.
\end{align*}

So $\p$ is $2s$-harmonic $(s=1,2,\cdots)$ if and only if 

$$||B||^4-m||B||^2-2(s-1)||\tau(\p)||^2=0.$$

Similarly,
$\p$ is  $2s+1$-harmonic $(s=1,2,\cdots)$ if and only if

$$||B||^4-m||B||^2-(2s-1)||\tau(\p)||^2=0.$$
\end{proof}

\begin{cor}
Let $\p:M\rightarrow S^{m+1}$ be a $k$-harmonic isometric immersion $(k=3,4, \cdots)$ having parallel second fundamental form
 $B\neq 0$. If $\p$ is biharmonic, then $\p$ is harmonic.
\end{cor}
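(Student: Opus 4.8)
The plan is to combine Proposition~\ref{to sphere} for the given $k$ with the biharmonicity hypothesis, which by Theorem~\ref{bi sphere} (or equivalently by the $k=2$ case of Proposition~\ref{to sphere}) says $\|B\|^2=m$. First I would record the two identities that the two hypotheses force. Biharmonicity, together with the standing assumptions (isometric immersion into $S^{m+1}$ with parallel second fundamental form, hence in particular parallel mean curvature vector), gives
\begin{align*}
\|B\|^4-m\|B\|^2=0,
\end{align*}
and since $B\neq0$ and $\|B\|^2=h_{ij}h_{ij}>0$, this yields $\|B\|^2=m$.

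Next I would feed $\|B\|^2=m$ into the $k$-harmonicity condition. By Proposition~\ref{to sphere}, $\p$ being $k$-harmonic means
\begin{align*}
\|B\|^4-m\|B\|^2-(k-2)\|\tau(\p)\|^2=0.
\end{align*}
The first two terms cancel because $\|B\|^2=m$, so we are left with $(k-2)\|\tau(\p)\|^2=0$. Since $k\geq3$, the factor $k-2$ is a positive integer, hence $\|\tau(\p)\|^2=0$, i.e. $\tau(\p)=0$, which is exactly the statement that $\p$ is harmonic.

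One small caveat I would address explicitly: Proposition~\ref{to sphere} is stated for $k$-harmonic with the convention that splits into the even case $k=2s$ (condition $\|B\|^4-m\|B\|^2-2(s-1)\|\tau(\p)\|^2=0$) and the odd case $k=2s+1$ (condition $\|B\|^4-m\|B\|^2-(2s-1)\|\tau(\p)\|^2=0$); in both cases the coefficient of $\|\tau(\p)\|^2$ equals $k-2$, and for $k\geq 3$ it is strictly positive, so the argument goes through uniformly. There is no real obstacle here — the only point worth care is that one needs $B\neq0$ to pass from $\|B\|^4-m\|B\|^2=0$ to $\|B\|^2=m$ (otherwise $\|B\|^2=0$ is also a solution), and this is precisely why the hypothesis $B\neq0$ appears in the statement. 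Thus the proof is a two-line consequence of Proposition~\ref{to sphere} applied twice, once with $k=2$ and once with the given $k$.

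\begin{proof}
By Proposition~\ref{to sphere} with $k=2$ (equivalently, Theorem~\ref{bi sphere}), biharmonicity of $\p$ gives $\|B\|^4-m\|B\|^2=0$. Since $B\neq 0$, we have $\|B\|^2>0$, hence $\|B\|^2=m$. On the other hand, by Proposition~\ref{to sphere} applied to the given $k$, the $k$-harmonicity of $\p$ is equivalent to
\begin{align*}
\|B\|^4-m\|B\|^2-(k-2)\|\tau(\p)\|^2=0.
\end{align*}
Substituting $\|B\|^2=m$, the first two terms cancel and we obtain $(k-2)\|\tau(\p)\|^2=0$. As $k\geq 3$, it follows that $\|\tau(\p)\|^2=0$, i.e. $\tau(\p)=0$, so $\p$ is harmonic.
\end{proof}
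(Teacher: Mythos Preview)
Your proof is correct and follows the same approach as the paper: substitute the biharmonic consequence $\|B\|^2=m$ into the $k$-harmonic condition of Proposition~\ref{to sphere} to obtain $(k-2)\|\tau(\p)\|^2=0$. The paper's proof is terser (it writes $0=m^2-m\cdot m-(k-2)\|\tau(\p)\|^2$ directly), while you make explicit why $B\neq 0$ is needed to rule out the $\|B\|^2=0$ branch, which is a nice clarification.
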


\begin{proof}
By Proposition \ref{to sphere},
$$0=m^2-m\cdot m-(k-2)||\tau(\p)||^2=-(k-2)||\tau(\p)||^2.$$
So we get the corollary.
\end{proof}

We define (cf. \cite{ws1},\cite{jg1})
$$M^m_{p}(\lambda)=S^p\left(\frac{1}{\sqrt{1+\lambda^2}}\right)\times S^{m-p}\left(\frac{\lambda}{\sqrt{1+\lambda^2}}\right)
\ \ \ \ \lambda>0, 0\leq p\leq m.$$

Indeed, $\p:M^m_p(\lambda)\rightarrow S^{m+1}$ has the parallel second fundamental form, and $\p$ satisfies that

$$||\tau(\p)||=|p\lambda-(m-p)\frac{1}{\lambda}|,$$
and 
$$||B||^2=p\lambda^2+(m-p)\frac{1}{\lambda^2}.$$

Thus, when $\lambda^2=\frac{m-p}{p}$, $\p$ is harmonic.

Using Proposition \ref{to sphere}, $\p$ is $k$-harmonic if and only if 


$$(\lambda^2-\frac{m-p}{p})(\lambda^6-(k-1)\lambda^4+(k-1)\frac{(m-p)}{p}\lambda^2-\frac{m-p}{p})=0.$$

Thus $\p$ is proper $k$-harmonic if and only if

$$\lambda^6-(k-1)\lambda^4+(k-1)\frac{(m-p)}{p}\lambda^2-\frac{m-p}{p}=0,\ \ \lambda^2\neq \frac{m-p}{p}.$$

So we obtain two theorems.
\begin{thm}
Isometric immersion $\p:S^m(\frac{1}{\sqrt{k}})\rightarrow S^{m+1}$ is a  special proper  $k$-harmonic map .
\end{thm}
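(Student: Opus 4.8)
The plan is to recognize $S^m(1/\sqrt k)$ as the $p=m$ member of the family $M^m_p(\lambda)$ introduced just before the statement, and then to apply Proposition~\ref{to sphere} directly. Geometrically, $\p$ realizes $S^m(1/\sqrt k)$ in $S^{m+1}$ as a geodesic hypersphere, i.e.\ a totally umbilic hypersurface of constant principal curvature; such a hypersurface has parallel second fundamental form, $\nabla^{\perp}B=0$ (the unit normal is parallel in the rank-one normal bundle and the mean curvature is constant), so the hypotheses of Proposition~\ref{to sphere} are met. In the notation $M^m_m(\lambda)=S^m(1/\sqrt{1+\lambda^2})$, matching radii forces $1/\sqrt{1+\lambda^2}=1/\sqrt k$, that is $\lambda^2=k-1$ (equivalently $\lambda=\cot\rho=\sqrt{1-r^2}/r$ with $r=\sin\rho=1/\sqrt k$).

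Next I would read off the two invariants. Specializing the formulas recorded just before the statement to $p=m$ gives $\|\tau(\p)\|=|m\lambda|=m\lambda$ and $\|B\|^2=m\lambda^2$; equivalently, writing $B(e_i,e_j)=\lambda\,\delta_{ij}\,\xi$ for an orthonormal frame of the induced metric, one has $\tau(\p)=m\lambda\,\xi$ and $\|B\|^2=\sum_{i,j}\lambda^2\delta_{ij}=m\lambda^2$. By Proposition~\ref{to sphere}, $\p$ is $k$-harmonic if and only if $\|B\|^4-m\|B\|^2-(k-2)\|\tau(\p)\|^2=0$; substituting $\lambda^2=k-1$,
\begin{align*}
\|B\|^4-m\|B\|^2-(k-2)\|\tau(\p)\|^2
&=m^2(k-1)^2-m^2(k-1)-(k-2)\,m^2(k-1)\\
&=m^2(k-1)\big((k-1)-1-(k-2)\big)=0,
\end{align*}
so $\p$ is indeed $k$-harmonic. (This is also visible from the factorization displayed just above the statement: at $p=m$ its second factor collapses to $\lambda^4(\lambda^2-(k-1))$, whose unique positive root is $\lambda^2=k-1$, i.e.\ radius $1/\sqrt k$.)

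It remains to check that $\p$ is proper: since $k\ge 2$ we have $\lambda=\sqrt{k-1}>0$, hence $\tau(\p)=m\sqrt{k-1}\,\xi\ne 0$, so $\p$ is not harmonic and is therefore a proper $k$-harmonic map; it is the \emph{special} one in the sense of being the $p=m$ (geodesic-hypersphere) case of the family $M^m_p(\lambda)$. I do not anticipate a genuine obstacle here. The only points requiring a little care are (i) the standard fact that a geodesic hypersphere of $S^{m+1}$ is totally umbilic with principal curvature $\cot\rho$, so that $\nabla^{\perp}B=0$ and Proposition~\ref{to sphere} applies verbatim — one can cross-check this against the identities $\|\tau(\p)\|=|p\lambda-(m-p)/\lambda|$ and $\|B\|^2=p\lambda^2+(m-p)/\lambda^2$ at $p=m$ — and (ii) keeping the bookkeeping straight so that $\lambda^2=k-1$ emerges and is consistent with the prescribed radius $1/\sqrt k$.
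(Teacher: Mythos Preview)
Your proposal is correct and follows essentially the same approach as the paper: both rest on Proposition~\ref{to sphere} applied to the totally umbilic small hypersphere (the $p=m$ degeneration of $M^m_p(\lambda)$), with $\|B\|^2=m\lambda^2$, $\|\tau(\p)\|=m\lambda$, and $\lambda^2=k-1$ forced by the radius $1/\sqrt{k}$. The paper records this via the sixth-degree polynomial in $\lambda$ which, at $p=m$, collapses to $\lambda^4(\lambda^2-(k-1))=0$, while you plug directly into the quadratic criterion of Proposition~\ref{to sphere}; these are the same computation.
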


\begin{thm}
Isometric immersion $\p:M^m_{p}(\lambda)\rightarrow S^{m+1}$ is a proper $k$-harmonic map,
where $\lambda$ satisfies that
$$\lambda^6-(k-1)\lambda^4+(k-1)\frac{(m-p)}{p}\lambda^2-\frac{m-p}{p}=0,\ \ \lambda^2\neq \frac{m-p}{p}.$$
\end{thm}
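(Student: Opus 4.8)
The plan is to obtain the theorem as an immediate consequence of Proposition~\ref{to sphere}, once the basic invariants of $M^m_p(\lambda)$ have been computed.

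First I would recall the standard presentation of $M^m_p(\lambda)=S^p\big(1/\sqrt{1+\lambda^2}\big)\times S^{m-p}\big(\lambda/\sqrt{1+\lambda^2}\big)$ as a hypersurface of the unit sphere $S^{m+1}\subset\mathbb{R}^{p+1}\times\mathbb{R}^{m-p+1}$. With the radii chosen as above, the product metric agrees with the metric induced from $S^{m+1}$, so the inclusion $\p$ is an isometric immersion; and a direct computation of the shape operator with respect to a unit normal $\xi$ shows that the principal curvatures are $\lambda$ with multiplicity $p$ (on the $S^p$-factor) and $-1/\lambda$ with multiplicity $m-p$ (on the $S^{m-p}$-factor), each constant, so that $B$ is parallel. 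From this one reads off
\begin{align*}
\|B\|^2=p\lambda^2+(m-p)\frac{1}{\lambda^2},\qquad
\|\tau(\p)\|=\Big|\,p\lambda-(m-p)\frac{1}{\lambda}\,\Big|,
\end{align*}
which are the quantities already quoted in the text preceding the statement.

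Since $B$ is parallel, Proposition~\ref{to sphere} applies and tells us that $\p$ is $k$-harmonic if and only if $\|B\|^4-m\|B\|^2-(k-2)\|\tau(\p)\|^2=0$. I would substitute the two formulas above, put $t=\lambda^2$, clear denominators, and simplify; the equation then reduces to the vanishing of
\begin{align*}
\Big(\lambda^2-\frac{m-p}{p}\Big)\Big(\lambda^6-(k-1)\lambda^4+(k-1)\frac{m-p}{p}\lambda^2-\frac{m-p}{p}\Big).
\end{align*}
The first factor vanishes exactly when $\lambda^2=(m-p)/p$, and in that case $p\lambda=(m-p)/\lambda=\sqrt{p(m-p)}$, so $\tau(\p)=0$ and $\p$ is the minimal, hence harmonic, Clifford hypersurface. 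Therefore $\p$ is proper $k$-harmonic precisely when the second (sextic) factor vanishes while $\lambda^2\neq(m-p)/p$ — which is exactly the hypothesis on $\lambda$ in the statement, since then $\|\tau(\p)\|\neq0$ forces $\p$ to be non-harmonic. This is the claim of the theorem.

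There is no genuine conceptual obstacle; the result is essentially a corollary of Proposition~\ref{to sphere}. The only care-demanding steps are the geometric bookkeeping (checking that the radii make $\p$ isometric and that $B$ is parallel with the stated principal curvatures) and the algebraic manipulation that produces the displayed factorization. I would also include a brief remark that the statement is not vacuous: for $0\le p<m$ the cubic $f(t)=t^3-(k-1)t^2+(k-1)\frac{m-p}{p}t-\frac{m-p}{p}$ satisfies $f(0)=-\frac{m-p}{p}<0$ and $f(t)\to+\infty$, so it has a positive root $t_0=\lambda_0^2$; and $f\big((m-p)/p\big)=\frac{m-p}{p}\big((m-p)^2/p^2-1\big)$ is nonzero unless $m=2p$, so generically $t_0\neq(m-p)/p$ and $\lambda_0$ yields an honestly proper $k$-harmonic example.
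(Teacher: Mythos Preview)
Your proposal is correct and follows essentially the same route as the paper: compute $\|B\|^2$ and $\|\tau(\p)\|$ for the Clifford-type hypersurface $M^m_p(\lambda)$, note that $B$ is parallel, apply Proposition~\ref{to sphere}, and factor the resulting polynomial equation as $(\lambda^2-\tfrac{m-p}{p})(\lambda^6-(k-1)\lambda^4+(k-1)\tfrac{m-p}{p}\lambda^2-\tfrac{m-p}{p})=0$. Your added non-vacuity remark is a nice touch (the paper relegates this to the subsequent corollaries), though you should write $0<p<m$ rather than $0\le p<m$ since $p=0$ makes $(m-p)/p$ undefined.
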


Especially, we have the next two results

\begin{cor}

When, $m=2p$,

1)When, $k=2,3,4$.

There are no proper $k$-harmonic map.

2)When, $k=5,6,\cdots$.

 Isometric immersion $\p:M^m_{p}(\lambda)\rightarrow S^{m+1}$ is a proper $k$-harmonic map 
 if and only if 
$$\lambda=\sqrt{\frac{k-2\pm\sqrt{k(k-4)}}{2}}.$$
\end{cor}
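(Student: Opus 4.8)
The plan is to start from the characterization obtained just above the corollary: the isometric immersion $\p:M^m_p(\lambda)\to S^{m+1}$ is proper $k$-harmonic if and only if
$$\lambda^6-(k-1)\lambda^4+(k-1)\frac{m-p}{p}\lambda^2-\frac{m-p}{p}=0,\qquad \lambda^2\neq\frac{m-p}{p}.$$
Specializing to $m=2p$ gives $\frac{m-p}{p}=1$, so after the substitution $x=\lambda^2>0$ the condition becomes $P(x):=x^3-(k-1)x^2+(k-1)x-1=0$ together with $x\neq1$. First I would observe that $x=1$ is always a root of $P$ and divide it out, obtaining the factorization $P(x)=(x-1)\bigl(x^2-(k-2)x+1\bigr)$. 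Hence being proper $k$-harmonic is equivalent to $Q(x):=x^2-(k-2)x+1=0$ subject to $x>0$ and $x\neq1$, and the whole corollary reduces to an analysis of the quadratic $Q$.

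Next I would examine the discriminant of $Q$, namely $(k-2)^2-4$. It is negative for $k=2,3$, so $Q$ has no real root there; it is zero for $k=4$, in which case the unique root is $x=1$, which is precisely the excluded harmonic value. This disposes of part~(1): for $k=2,3,4$ there is no admissible $\lambda$, hence no proper $k$-harmonic map. For $k\geq5$ the discriminant is strictly positive, so $Q$ has two distinct real roots $x=\tfrac{k-2\pm\sqrt{k(k-4)}}{2}$; since their product equals $1$ and their sum equals $k-2>0$ both roots are positive, and neither equals $1$ because $Q(1)=4-k\neq0$ for $k\neq4$ (indeed $Q(1)<0$, so $1$ lies strictly between the two roots). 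Therefore both roots are admissible, and taking $\lambda=\sqrt{x}$ yields exactly the two values $\lambda=\sqrt{\tfrac{k-2\pm\sqrt{k(k-4)}}{2}}$, which proves part~(2).

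The computations involved are elementary; the points that require care are recognizing that $x=1$ is automatically a root of the cubic $P$ (so that $P$ factors and $Q$ is the real object of study), and treating the borderline case $k=4$ correctly, where the double root of $Q$ collides with the forbidden value $\lambda^2=\frac{m-p}{p}=1$ and must be discarded, leaving no proper solution. Beyond this bookkeeping — checking the sign of the discriminant, the positivity of the roots via their product and sum, and the exclusion $x\neq1$ — I expect no genuine obstacle.
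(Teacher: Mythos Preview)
Your proposal is correct and is exactly the intended argument: the paper states the corollary without proof, leaving it as an immediate specialization of the preceding characterization, and your factorization $P(x)=(x-1)\bigl(x^2-(k-2)x+1\bigr)$ together with the discriminant analysis of $Q$ is precisely the elementary computation that fills in the gap. Your handling of the borderline case $k=4$ and the positivity/exclusion checks for $k\ge5$ are all in order.
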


\begin{cor}
When, $m\neq2p$, isometric immersion $\p:M^m_{p}(\lambda)\rightarrow S^{m+1}$ is proper $k$-harmonic,
where, 
$$\lambda=\sqrt{A^{\frac{1}{3}}+\frac{k^2-3a(k-1)-2k+1}{9A^{\frac{1}{3}}}+\frac{k-1}{3}},$$
where,
\begin{align*}
A=&\frac{\sqrt{-a(ak^4-4(a^2+a+1)k^3+12(a+1)^2k^2-4(3a^2+10a+3)k+4(a-1)^2)}}{23^\frac{3}{2}}\\
&-\frac{-2k^3+9a(k^2-2k-2)+2(3k^2-3k+1)}{54},
\end{align*}
\begin{align*}
a=\frac{m-p}{p}.
\end{align*}
\end{cor}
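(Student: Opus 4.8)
The plan is to reduce the assertion to the solution of a single cubic equation and then to apply Cardano's formula.

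First I would collect the data already computed after Proposition \ref{to sphere}. For $0<p<m$ put $a=\frac{m-p}{p}>0$; then $\p:M^m_p(\lambda)\to S^{m+1}$ has parallel second fundamental form with $\|\tau(\p)\|=\bigl|p\lambda-(m-p)\lambda^{-1}\bigr|$ and $\|B\|^2=p\lambda^2+(m-p)\lambda^{-2}$, and the criterion $\|B\|^4-m\|B\|^2-(k-2)\|\tau(\p)\|^2=0$ of Proposition \ref{to sphere}, after cancelling the harmonic factor $\lambda^2-a$, becomes
$$\lambda^6-(k-1)\lambda^4+(k-1)a\lambda^2-a=0,\qquad \lambda^2\neq a.$$
Setting $x=\lambda^2$, the goal is a positive root $x\neq a$ of $f(x):=x^3-(k-1)x^2+(k-1)ax-a$, equal to the claimed value squared. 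Observe $f(a)=a^3-a=a(a^2-1)$, so $x=a$ is a root of $f$ exactly when $a=1$, i.e. $m=2p$; hence when $m\neq 2p$ every real root of $f$ automatically obeys $\lambda^2\neq a$, so the properness clause costs nothing. Moreover $f(0)=-a<0$ while $f(x)\to+\infty$, so $f$ has a positive real root and $\lambda=\sqrt x$ will be real.

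Next I would solve $f(x)=0$ by Cardano's method. The shift $x=y+\frac{k-1}{3}$ kills the quadratic term and yields the depressed cubic $y^3+\beta y+\gamma=0$ with
$$\beta=(k-1)a-\frac{(k-1)^2}{3},\qquad \gamma=-\frac{2(k-1)^3}{27}+\frac{(k-1)^2a}{3}-a.$$
Writing $y=u+w$ with $3uw=-\beta$ forces $u^3,w^3$ to be the two roots of $t^2+\gamma t-\frac{\beta^3}{27}=0$; taking $u^3=A:=-\frac{\gamma}{2}+\sqrt{\frac{\gamma^2}{4}+\frac{\beta^3}{27}}$ and $w=-\frac{\beta}{3u}$ gives
$$x=A^{1/3}-\frac{\beta}{3A^{1/3}}+\frac{k-1}{3}.$$
Comparing with the stated formula then reduces to three algebraic identities: $-\frac{\beta}{3}=\frac{k^2-3a(k-1)-2k+1}{9}$, and $-\frac{\gamma}{2}=-\frac{-2k^3+9a(k^2-2k-2)+2(3k^2-3k+1)}{54}$, and, using $\frac{\gamma^2}{4}+\frac{\beta^3}{27}=\frac{27\gamma^2+4\beta^3}{108}$ together with $\sqrt{108}=2\cdot 3^{3/2}$,
$$27\gamma^2+4\beta^3=-a\bigl(ak^4-4(a^2+a+1)k^3+12(a+1)^2k^2-4(3a^2+10a+3)k+4(a-1)^2\bigr).$$
The first two are immediate expansions of $(k-1)^2$ and $(k-1)^3$, and together with the last they identify respectively the middle term of $x$, the rational summand of the stated $A$, and the radical in $A$; taking $\lambda=\sqrt x$ then gives the asserted value.

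The only substantive work is the last identity $27\gamma^2+4\beta^3=-a(\cdots)$: one substitutes the expressions for $\beta$ and $\gamma$, expands both sides as polynomials in $k$ and $a$, and compares coefficients. This is purely mechanical but long, and it is the step I expect to be the main obstacle. Two small caveats should be recorded: the cube and square roots must be taken with consistent branches, and in the \emph{casus irreducibilis} — when $27\gamma^2+4\beta^3<0$ — Cardano's formula is to be read with the usual complex interpretation, still producing a genuine real root, which is the one recorded in the statement.
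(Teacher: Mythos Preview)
Your proposal is correct and follows the only natural route: the paper itself gives no proof of this corollary, simply recording the outcome of Cardano's formula applied to the cubic $x^3-(k-1)x^2+(k-1)ax-a=0$ obtained just before Theorem 4.6. Your reduction, depression by $x=y+\tfrac{k-1}{3}$, identification of $\beta,\gamma$, and matching of $-\beta/3$, $-\gamma/2$, and $27\gamma^2+4\beta^3$ with the three displayed pieces of the stated formula are all accurate (in particular your reading of the paper's ``$23^{3/2}$'' as $2\cdot 3^{3/2}=\sqrt{108}$ is the intended one), and your observations that $f(a)=a(a^2-1)\neq 0$ when $m\neq 2p$ and $f(0)=-a<0$ cleanly dispose of the properness and positivity requirements.
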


\section{$k$-harmonic submersion}\label{submersion}

In this section we generalize Oniciuc's results \cite{co1}.
Let 

\begin{align*}
&S^n(a)=S^n(a)\times \{b\}\\
&=\{p=(x^1,\cdots , x^{n+1},b);\ (x^1)^2+\cdots +(x^{n+1})^2=a^2,a\in (0,1),\ a^2+b^2=1\}
\end{align*}
be a parallel hypersurface of $S^{n+1}$, and the canonical metric $\langle \cdot, \cdot \rangle$ on $S^{n+1}$.

And let 
\begin{align*}
\Gamma(TS^n(a))=\{X=(X^1,\cdots , X^{n+1},0)\ x^1 X^1+\cdots +x^{n+1}X^{n+1}=0\},
\end{align*}

be the set of all sections of the tangent bundle of $S^n(a)$.

Then,
$$\eta=\frac{1}{c}(x^1,\cdots , x^{n+1},-\frac{a^2}{b}),$$
is a unit section in the tangent bundle of $S^n(a)$ in $S^{n+1}$, where $c>0$ and $c^2=a^2+\frac{a^4}{b^2}$ .

Thus $\eta$ satisfies that

$$\langle \eta , p \rangle =0,\ \ \ \ 
\langle \eta , X \rangle =0\ \ \ 
|\eta|=1.$$

By a direct computation we obtain 
\begin{align}\label{2.1}
A=-\frac{1}{c}I,\ \ B(X,Y)=-\frac{1}{c}\langle X,Y\rangle \eta,\ \ \nabla^{\perp}\eta=0,
\end{align}
where $A$ is the shape operator, $B$ the second fundamental form of $S^n(a)$ and $\nabla^{\perp}$
 is the normal connection in the normal bundle of $S^n(a)$ in $S^{n+1}$.

Now, we consider a Riemannian submersion $\varphi : (M,g)\rightarrow S^n(a)$, the canonical inclusion 
$i:S^n(a)\rightarrow S^{n+1}$, and 
$\p=i\circ \varphi :(M,g)\rightarrow S^{n+1}$. 
 The rank of $\p$ is constant, equal to $n$. 
The next theorem was proved by C. Oniciuc \cite{co1}.
\begin{thm}[\cite{co1}]
Assume that $\varphi:(M,g)\rightarrow S^n(a)$ is a harmonic Riemannian submersion. Then, 
$\p:(M,g)\rightarrow S^{n+1}$ is not harmonic, and it is biharmonic if and only if $a=\frac{1}{\sqrt{2}}$ and $b=\pm\frac{1}{\sqrt{2}}$. 
\end{thm}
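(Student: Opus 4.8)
The plan is to compute the bitension field $\tau_2(\p)=\olapla\tau(\p)-R^{S^{n+1}}(\tau(\p),d\p(e_i))d\p(e_i)$ of $\p=i\circ\varphi$ explicitly, using only the structural identities $(\ref{2.1})$, the harmonicity of $\varphi$, and the composition formula for tension fields, and then to read off for which $a,b$ it vanishes.

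First I would fix a local orthonormal frame $\{e_i\}_{i=1}^m$ on $M$ adapted to the Riemannian submersion $\varphi$, with $e_1,\dots,e_n$ horizontal and $e_{n+1},\dots,e_m$ vertical; then $\{d\varphi(e_\alpha)\}_{\alpha=1}^n$ is an orthonormal frame of $TS^n(a)$ while $d\varphi(e_\alpha)=0$ for $\alpha>n$. From the composition formula $\tau(i\circ\varphi)=di(\tau(\varphi))+\sum_{k=1}^m B(d\varphi(e_k),d\varphi(e_k))$, where $B$ denotes the second fundamental form of $i:S^n(a)\to S^{n+1}$, together with $\tau(\varphi)=0$, we get $\tau(\p)=\sum_{\alpha=1}^n B(d\varphi(e_\alpha),d\varphi(e_\alpha))$. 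Substituting $(\ref{2.1})$ gives $\tau(\p)=-\frac{n}{c}\eta$ (regarded as a section of $\p^{-1}TS^{n+1}$), which is nowhere zero since $n\geq 1$ and $c>0$; hence $\p$ is not harmonic.

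Next I would evaluate the two terms of $\tau_2(\p)$. Because $S^{n+1}$ has constant sectional curvature $1$, $R^{S^{n+1}}(U,V)W=\langle V,W\rangle U-\langle W,U\rangle V$; since $\eta\perp TS^n(a)$ one has $\langle\tau(\p),d\p(e_k)\rangle=0$ for every $k$, while $\sum_k\langle d\p(e_k),d\p(e_k)\rangle=n$, so $R^{S^{n+1}}(\tau(\p),d\p(e_k))d\p(e_k)=n\,\tau(\p)$. For the rough Laplacian the key input is the Weingarten formula: for $Y\in\Gamma(TS^n(a))$, $\nabla^{S^{n+1}}_Y\eta=-AY+\nabla^{\perp}_Y\eta=\frac{1}{c}Y$ by $(\ref{2.1})$. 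Hence $\onabla_X\tau(\p)=-\frac{n}{c}\,\nabla^{S^{n+1}}_{d\p(X)}\eta=-\frac{n}{c^2}\,d\p(X)$ for every $X\in\Gamma(TM)$; differentiating once more, inserting $\olapla=-\sum_k(\onabla_{e_k}\onabla_{e_k}-\onabla_{\nabla_{e_k}e_k})$, and using $\onabla_{e_k}d\p(e_k)-d\p(\nabla_{e_k}e_k)=(\widetilde\nabla_{e_k}d\p)(e_k)$ with $\sum_k(\widetilde\nabla_{e_k}d\p)(e_k)=\tau(\p)$, everything collapses to $\olapla\tau(\p)=\frac{n}{c^2}\tau(\p)$. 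Therefore $\tau_2(\p)=\left(\frac{n}{c^2}-n\right)\tau(\p)=n\left(\frac{1}{c^2}-1\right)\tau(\p)$, and since $\tau(\p)\neq 0$ this vanishes if and only if $c^2=1$. As $c^2=a^2+\frac{a^4}{b^2}=\frac{a^2}{b^2}$ by $a^2+b^2=1$, the condition $c^2=1$ is equivalent to $a^2=b^2=\frac{1}{2}$, i.e. $a=\frac{1}{\sqrt 2}$ and $b=\pm\frac{1}{\sqrt 2}$.

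The only genuinely delicate point is the rough Laplacian computation in the third step: one must keep the pullback connection $\onabla$ carefully distinct from the Levi-Civita connection of $S^{n+1}$, observe that the vertical directions contribute nothing because they are killed by $d\varphi$, and check that the iterated covariant derivative telescopes to a scalar multiple of $\tau(\p)$ rather than generating extra normal or tangential terms. The composition formula for $\tau$, the constant-curvature form of $R^{S^{n+1}}$, and the substitutions from $(\ref{2.1})$ are all routine.
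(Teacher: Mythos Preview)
Your proof is correct and follows essentially the same route as the paper (which does not prove this theorem separately but obtains it as the case $s=1$ of Proposition~\ref{2s Sa}): compute $\tau(\p)=-\frac{n}{c}\eta$ via the composition formula, show $\olapla\tau(\p)=\frac{n}{c^2}\tau(\p)$ and $R^{S^{n+1}}(\tau(\p),d\p(e_i))d\p(e_i)=n\tau(\p)$, and solve $c^2=1$. The only difference is cosmetic: the paper computes $\olapla\eta$ by treating the horizontal and vertical pieces of the frame separately and invoking the O'Neill relation $d\vp(\nabla_{X_\alpha}X_\alpha)=\nabla^N_{Y_\alpha}Y_\alpha$, whereas you package everything into the single identity $\onabla_X\tau(\p)=-\frac{n}{c^2}\,d\p(X)$ (valid for all $X$, vertical or horizontal) and then read off $\olapla\tau(\p)=\frac{n}{c^2}\sum_k(\widetilde\nabla_{e_k}d\p)(e_k)=\frac{n}{c^2}\tau(\p)$ directly from the definition of the tension field---a slightly cleaner bookkeeping, but the same computation.
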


We generalize this theorem.

\begin{prop}\label{2s Sa}
Assume that $\varphi:(M,g)\rightarrow S^n(a)$ is a harmonic Riemannian submersion. Then, 
 $\p:(M,g)\rightarrow S^{n+1}$ is proper $2s$-harmonic if and only if
$a=\frac{1}{\sqrt{2s}}$ and $b=\pm\sqrt{\frac{2s-1}{2s}}$. 
\end{prop}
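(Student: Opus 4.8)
The plan is to reduce the problem to an algebraic equation in $a$ by computing $\olapla^k\tau(\p)$ explicitly, exactly as was done for the immersion case in Proposition \ref{to sphere}, and then invoke the $2s$-harmonic Euler--Lagrange equation \eqref{2s-harmonic eq}. First I would record the geometry of the composition $\p = i\circ\vp$: since $\vp$ is a harmonic Riemannian submersion onto $S^n(a)$ and $i:S^n(a)\hookrightarrow S^{n+1}$ is the inclusion of a parallel hypersurface, the tension field $\tau(\p)$ is, by the composition formula $\tau(i\circ\vp)=d i(\tau(\vp))+\operatorname{tr}\nabla d i(d\vp,d\vp)$, equal to $n$ times the trace of the second fundamental form of $i$ along $\vp$. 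Using \eqref{2.1}, namely $B(X,Y)=-\tfrac1c\langle X,Y\rangle\eta$ with $c^2=a^2+a^4/b^2$, this gives $\tau(\p)=-\tfrac{n}{c}\eta$, a (nonzero) normal field with $\nabla^\perp\eta=0$; in particular $\p$ is never harmonic, recovering the first assertion.

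Next I would show that the rough Laplacian acts on $\tau(\p)$ by a scalar, i.e. $\olapla\tau(\p)=\mu\,\tau(\p)$ for a constant $\mu=\mu(a)$ that I must compute. The key point is that, because $\nabla^\perp\eta=0$ and the shape operator of $S^n(a)$ in $S^{n+1}$ is $A=-\tfrac1c I$, the field $\eta$ behaves like the unit normal of a totally umbilic hypersurface, and $\onabla_X\eta = d\p(-\tfrac1c X$ pulled up$)$, so a second covariant differentiation and trace produce $\olapla\eta$ proportional to $\eta$ plus a curvature term from $R^{S^{n+1}}$; since $S^{n+1}$ has constant curvature the curvature term is again a multiple of $\eta$ (this is the analogue of the step in Lemma \ref{olapla^k B} where $R^N(d\p(e_j),d\p(e_k))d\p(e_k)$ is tangential). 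I expect $\mu$ to work out to $\mu = n$ (or a simple expression in $a$, $n$), matching the pattern $\|B\|^{2k}$ from the immersion case with $\|B\|^2$ replaced by the appropriate umbilicity scalar; whatever the clean value, one then gets $\olapla^k\tau(\p)=\mu^k\tau(\p)$ by iterating, exactly as in Lemma \ref{olapla^k B}.

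With $\olapla^k\tau(\p)=\mu^k\tau(\p)$ in hand, I would substitute into \eqref{2s-harmonic eq}. The curvature terms $R^N(\olapla^{2s-2}\tau(\p),d\p(e_j))d\p(e_j)$ and the double sum over $l$ all reduce, via the constant-curvature formula $R^N(U,V)W = K\{\langle V,W\rangle U-\langle W,U\rangle V\}$ with $K=1$, to scalar multiples of $\tau(\p)$ involving $\|d\p(e_j)\|^2 = n$ (the rank) and inner products of $\olapla$-powers of $\tau(\p)$ with $d\p(e_j)$, which vanish since $\tau(\p)$ is normal — so only the first $R^N$ term survives, contributing $-n\,\mu^{2s-2}\tau(\p)$, together with the leading term $\mu^{2s-1}\tau(\p)$. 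Collecting, $\tau_{2s}(\p) = \mu^{2s-2}\tau(\p)\bigl(\mu - n\bigr)$ up to the double-sum contribution; carefully tracking the sum (whose summands are multiples of $\|\tau(\p)\|^2\tau(\p)$, with $\|\tau(\p)\|^2 = n^2/c^2$) yields a polynomial identity in $a$, and setting it to zero and solving gives $a^2 = 1/(2s)$, hence $b = \pm\sqrt{(2s-1)/(2s)}$, with properness automatic because $\tau(\p)\ne 0$.

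The main obstacle will be the bookkeeping in the middle step: correctly evaluating $\onabla_{e_i}\olapla^{j}\tau(\p)$ and feeding it through the double sum in \eqref{2s-harmonic eq}, since these terms involve $\onabla$ applied to $\tau(\p)$ rather than $\tau(\p)$ itself, and one must verify that after pairing against $d\p(e_j)$ via $R^N$ they collapse to the expected scalar multiple of $\tau(\p)$; this is precisely where the vanishing $\nabla^\perp\eta=0$ and the umbilicity relation $A=-\tfrac1c I$ must be used in tandem, and it is the place where an error in the constant $\mu$ or in the coefficient of the sum would propagate to the wrong value of $a$. Once that scalar algebra is pinned down, the rest is a one-line root extraction.
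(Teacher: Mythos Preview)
Your approach is the same as the paper's in outline, but there is a substantive gap in the middle step that, as written, would lead to the wrong answer.

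First, the scalar $\mu$ is not $n$ but $\mu = n/c^2$; the paper computes $\onabla_{X_\alpha}\eta = \tfrac{1}{c}Y_\alpha$ (your sign is off) and then $\olapla\eta = \tfrac{n}{c^2}\eta$, so $\olapla^l\tau(\p) = -\tfrac{n}{c}\bigl(\tfrac{n}{c^2}\bigr)^l\eta$. The dependence of $\mu$ on $c$ (hence on $a$) is precisely what makes the problem nontrivial; with $\mu=n$ your factor $(\mu-n)$ would vanish identically and give no equation for $a$.

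Second, and more importantly, the double sum in \eqref{2s-harmonic eq} does \emph{not} vanish, contrary to your claim that ``only the first $R^N$ term survives.'' It is true that $\langle \olapla^j\tau(\p),d\p(e_i)\rangle=0$ because $\eta$ is normal, but the summands also involve $\onabla_{e_i}\olapla^j\tau(\p)$, which is \emph{tangential} (since $\nabla^\perp\eta=0$ forces $\onabla_X\eta$ to lie in $\p_*TM$). The paper computes explicitly
\[
\sum_i R^{S^{n+1}}\bigl(\onabla_{X_i}\olapla^{l}\tau(\p),\olapla^{r}\tau(\p)\bigr)d\p(X_i)
= -\tfrac{n^2}{c}\bigl(\tfrac{n}{c^2}\bigr)^{l+r+1}\eta,
\]
and similarly for the term with the roles reversed, so each of the $s-1$ summands contributes $2\cdot\tfrac{n^2}{c}\bigl(\tfrac{n}{c^2}\bigr)^{2s-2}\eta$. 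Without this contribution the equation $\mu-n=0$ gives $c^2=1$, i.e.\ $a=\tfrac{1}{\sqrt{2}}$ for \emph{every} $s$, which is only the biharmonic case. With it, one obtains
\[
\tau_{2s}(\p) = -\tfrac{n}{c}\bigl(\tfrac{n}{c^2}\bigr)^{2s-1}\bigl\{1-(2s-1)c^2\bigr\}\eta,
\]
whence $c^2=\tfrac{1}{2s-1}$ and $a=\tfrac{1}{\sqrt{2s}}$. You do acknowledge later that the sum needs ``careful tracking,'' but the earlier assertion that it drops out is the actual error to fix.
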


\begin{proof}
Let $p\in M$, we have $T_pM=T^V_pM\oplus T^H_pM$, where 
$T^V_pM={\rm ker}d\vp _p$ and $T^H_pM$ is the ortogonal complement of $T^V_pM$ in $T_pM$
 with respect to the metric $g$. Let $W$ be an open subset of $S^n(a)$ such that $\vp (p)\in W$ 
 and let $\{Y_\alpha\}^n_{\alpha =1}$ be an orthonormal frame field of $W$. Set $U=\vp^{-1}(W)$,
 $\{X_{\alpha}\}=\{Y_{\alpha}^H\}$, and consider an orthonormal frame field $\{X_s\}^m_{s=n+1}$ on 
 $T^VU$. The tension field of $\vp$ is given by 
\begin{align}
\tau(\vp)_p=-\sum^m_{s=n+1}d\vp _p(\nabla_{X_s}X_s)
\end{align}  
(cf \cite{co1}).
Computing the tension field of $\p$ we obtain 
$$\tau(\p)=di(\tau(\vp))+trace\nabla di (d\vp\cdot , d\vp \cdot)=\sum^n_{\alpha =1}B(Y_{\alpha},Y_{\alpha})=-\frac{n}{c}\eta,$$
i.e., $\p$ is not harmonic.

To simplify the notation, we denote the Levi-Civita connection $\nabla^{S^n(a)}$ of $S^n(a)$ by $\nabla^N.$
Computing $\olapla \eta$ we get
\begin{align*}
\olapla \eta
=&-\sum^m_{i=1}\{\onabla_{X_i}\onabla_{X_i}\eta-\onabla_{\nabla_{X_i}X_i}\eta\}\\
=&-\sum^n_{\alpha=1}\{\onabla_{X_{\alpha}}\onabla_{X_{\alpha}}\eta-\onabla_{\nabla_{X_{\alpha}}X_{\alpha}}\eta\}\\
\hspace{10pt}&-\sum^m_{s=n+1}\{\onabla_{X_s}\onabla_{X_s}\eta-\onabla_{\nabla_{X_s}X_s}\eta\}.
\end{align*}
Here we obtain
$$\onabla_{X_{\alpha}}\eta=\nabla^{S^{n+1}}_{Y_{\alpha}}\eta=\frac{1}{c}Y_{\alpha},$$
and using $(\ref{2.1})$ we obtain
\begin{align} 
\onabla_{X_{\alpha}}\onabla_{X_{\alpha}}\eta
=&\frac{1}{c}\nabla^{S^{n+1}}_{Y_{\alpha}}Y_{\alpha}
=\frac{1}{c}(\nabla^N_{Y_{\alpha}}Y_{\alpha}-\frac{1}{c}\eta)\\
=&\frac{1}{c}\nabla^N_{Y_{\alpha}}Y_{\alpha}-\frac{1}{c^2}\eta\notag.
\end{align}
Further, we have
\begin{align}
\onabla_{\nabla_{X_{\alpha}}X_{\alpha}}\eta
=&\nabla^{S^{n+1}}_{\nabla^N_{Y_{\alpha}}Y_{\alpha}}\eta=\frac{1}{c}\nabla^N_{Y_{\alpha}}Y_{\alpha},
\end{align}
\begin{align}
\onabla_{X_s}\onabla_{X_s}\eta=0,
\end{align}
\begin{align}
\onabla_{\nabla_{X_s}X_s}\eta
=&\nabla^{S^{n+1}}_{d\vp(\nabla_{X_s}X_s)}\eta=\frac{1}{c}d\vp(\nabla_{X_s}X_s).
\end{align}

So we obtain
\begin{align}
\olapla \eta=\frac{n}{c^2}\eta.
\end{align}
Repeating these steps, we have
\begin{align}
\olapla^l \eta=(\frac{n}{c^2})^l\eta,\ \ \  (l=1,2,\cdots ),
\end{align}
and 
\begin{align}\label{ola}
\olapla^l \tau(\p)=-\frac{n}{c}(\frac{n}{c^2})^l\eta,\ \ \  (l=1,2,\cdots).
\end{align}
A direct computation shows
\begin{align}\label{R1}
\sum^m_{i=1}R^{S^{n+1}}(\olapla^l\tau(\p),d\p(X_i))d\p(X_i)=n\olapla^l\tau(\p)=-\frac{n^2}{c}(\frac{n}{c^2})^l\eta,
\end{align}
\begin{align}\label{R2}
\sum^m_{i=1}R^{S^{n+1}}(\onabla_{X_i}\olapla^l\tau(\p),\olapla^r\tau(\p))d\p(X_i)=-\frac{n^2}{c}(\frac{n}{c^2})^{l+r+1}\eta,
\end{align}
and
\begin{align}\label{R3}
\sum^m_{i=1}R^{n+1}(\olapla^l\tau(\p),\onabla_{X_i}\olapla^r\tau(\p))d\p(X_i)=\frac{n^2}{c}(\frac{n}{c^2})^{l+r+1}\eta.
\end{align}
Substituting $(\ref{ola}),\ (\ref{R1}),\ (\ref{R2})$ and $(\ref{R3})$ into $(\ref{2s-harmonic eq})$, we obtain
\begin{align}
\tau_{2s}(\p)
=&-\frac{n}{c}(\frac{n}{c^2})^{2s-1}\eta+\frac{n^2}{c}(\frac{n}{c^2})^{2s-2}\eta\\
&-\sum^{s-1}_{l=1}\{-\frac{n^2}{c}(\frac{n}{c^2})^{2s-2}\eta-\frac{n^2}{c}(\frac{n}{c^2})^{2s-2}\eta\}\notag\\
=&-\frac{n}{c}(\frac{n}{c^2})^{2s-1}\{1-(2s-1)c^2\}\eta.\notag
\end{align}
So $\p$ is proper $2s$-harmonic if and only if $c=\frac{1}{\sqrt{2s-1}}$ i.e., $a=\frac{1}{\sqrt{2s}}$ and $b=\pm\sqrt{\frac{2s-1}{2s}}$.  
\end{proof}

Similarly we have

\begin{prop}\label{2s+1 Sa}
Assume that $\varphi:(M,g)\rightarrow S^n(a)$ is a harmonic Riemannian submersion. Then, 
 $\p:(M,g)\rightarrow S^{n+1}$ is proper $2s+1$-harmonic if and only if
$a=\frac{1}{\sqrt{2s+1}}$ and $b=\pm\sqrt{\frac{2s}{2s+1}}$. 
\end{prop}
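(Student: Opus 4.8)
The plan is to mirror the proof of Proposition~\ref{2s Sa} essentially verbatim, replacing the even $k$-harmonic equation $(\ref{2s-harmonic eq})$ with the odd one $(\ref{2s+1-harmonic eq})$, and keeping all the geometric input from that argument unchanged. Concretely, the setup --- the splitting $T_pM=T^V_pM\oplus T^H_pM$, the frame $\{X_\alpha\}=\{Y_\alpha^H\}$, the vertical frame $\{X_s\}$, the formula $\tau(\p)=-\frac{n}{c}\eta$ showing $\p$ is not harmonic, and the structure equations $(\ref{2.1})$ --- is identical and can be cited from the previous proof. The key computational facts $\olapla^l\eta=(\frac{n}{c^2})^l\eta$, hence $\olapla^l\tau(\p)=-\frac{n}{c}(\frac{n}{c^2})^l\eta$ in $(\ref{ola})$, together with the three curvature identities $(\ref{R1})$, $(\ref{R2})$, $(\ref{R3})$, carry over without change, since none of them used the parity of $k$.

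The only genuinely new step is that $(\ref{2s+1-harmonic eq})$ carries an extra final term, $-R^N(\onabla_{e_i}\olapla^{s-1}\tau(\p),\olapla^{s-1}\tau(\p))d\p(e_i)$, beyond the even-case structure, and the exponents inside the sum $\sum_{l=1}^{s-1}$ are shifted by one relative to the even case. So I would first substitute $(\ref{ola})$ into $\olapla^{2s}\tau(\p)=-\frac{n}{c}(\frac{n}{c^2})^{2s}\eta$; substitute $(\ref{R1})$ with $l=2s-1$ into the $R^N(\olapla^{2s-1}\tau(\p),d\p(e_j))d\p(e_j)$ term to get $-\frac{n^2}{c}(\frac{n}{c^2})^{2s-1}\eta$; handle the sum $\sum_{l=1}^{s-1}$ using $(\ref{R2})$ and $(\ref{R3})$ with $(l,r)=(s+l-1,s-l-1)$, which gives $l+r+1=2s-1$, so each summand contributes (after the sign $-$ out front and the cancellation between the two bracketed terms) $+2\cdot\frac{n^2}{c}(\frac{n}{c^2})^{2s-1}\eta$, times $(s-1)$ terms; and finally evaluate the extra term via $(\ref{R2})$ with $l=r=s-1$, so $l+r+1=2s-1$, yielding $+\frac{n^2}{c}(\frac{n}{c^2})^{2s-1}\eta$. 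Collecting, $\tau_{2s+1}(\p)=-\frac{n}{c}(\frac{n}{c^2})^{2s-1}\{\frac{n}{c^2}-n-2(s-1)n-n\}\eta = -\frac{n^2}{c}(\frac{n}{c^2})^{2s-1}\{\frac{1}{c^2}-2s\}\eta$ after pulling out a factor $n$.

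Thus $\p$ is proper $(2s+1)$-harmonic precisely when the scalar factor vanishes, i.e. $c^2=\frac{1}{2s}$. Recalling $c^2=a^2+\frac{a^4}{b^2}=a^2\cdot\frac{a^2+b^2}{b^2}=\frac{a^2}{b^2}$ together with $a^2+b^2=1$, the condition $c^2=\frac{a^2}{1-a^2}=\frac{1}{2s}$ gives $2sa^2=1-a^2$, hence $a^2=\frac{1}{2s+1}$ and $b^2=\frac{2s}{2s+1}$, that is $a=\frac{1}{\sqrt{2s+1}}$ and $b=\pm\sqrt{\frac{2s}{2s+1}}$, as claimed. (Here I would double-check the $c^2$ bookkeeping against the value $c^2=\frac{a^2}{b^2}$; the even case used $c=\frac{1}{\sqrt{2s-1}}$ to conclude $a=\frac{1}{\sqrt{2s}}$, consistent with $\frac{a^2}{b^2}=\frac{1}{2s-1}\Leftrightarrow a^2=\frac{1}{2s}$.) The main obstacle is purely bookkeeping: getting the exponent shifts in the odd-case sum right and correctly accounting for the extra single term, since an off-by-one there would change $2s+1$ to $2s-1$ or similar; there is no new geometric difficulty.
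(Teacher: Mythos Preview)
Your proposal is correct and follows exactly the approach the paper intends: the paper's own proof reads in its entirety ``The proof is similar to that of Proposition~\ref{2s Sa},'' and you have carried out precisely that analogous computation, correctly handling the shifted exponents in the sum and the extra curvature term in $(\ref{2s+1-harmonic eq})$ to arrive at $\tau_{2s+1}(\p)=-\frac{n^2}{c}(\frac{n}{c^2})^{2s-1}\{\frac{1}{c^2}-2s\}\eta$ and hence $c^2=\frac{1}{2s}$, $a=\frac{1}{\sqrt{2s+1}}$.
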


\begin{proof}
The proof is similar to that of Proposition \ref{2s Sa}.
\end{proof}

By Proposition \ref{2s Sa}, \ref{2s+1 Sa}, we obtain the next theorem.
\begin{thm}
Assume that $\varphi:(M,g)\rightarrow S^n(a)$ is a harmonic Riemannian submersion. Then, 
$\p:(M,g)\rightarrow S^{n+1}$ is proper $k$-harmonic if and only if
$a=\frac{1}{\sqrt{k}}$ and $b=\pm\sqrt{\frac{k-1}{k}}$. 
\end{thm}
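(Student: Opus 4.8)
The plan is to deduce this theorem directly from Propositions \ref{2s Sa} and \ref{2s+1 Sa} by splitting into the two parity classes of $k$, so no new geometric input is required; the only work is bookkeeping the index substitutions and checking that both cases collapse to the single closed form in the statement. Throughout we keep the notation of the two preceding proofs: $c>0$ with $c^2=a^2+\tfrac{a^4}{b^2}$, and $\tau(\p)=-\tfrac{n}{c}\eta\neq 0$, so $\p$ is never harmonic and ``proper $k$-harmonic'' is the same as ``$k$-harmonic'' here.

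First I would treat the even case. Writing $k=2s$ with $s\ge 1$, Proposition \ref{2s Sa} says that $\p$ is proper $2s$-harmonic if and only if $a=\tfrac{1}{\sqrt{2s}}$ and $b=\pm\sqrt{\tfrac{2s-1}{2s}}$; substituting $2s=k$ gives $a=\tfrac{1}{\sqrt{k}}$ and $b=\pm\sqrt{\tfrac{k-1}{k}}$. Next I would treat the odd case. Writing $k=2s+1$ with $s\ge 1$, Proposition \ref{2s+1 Sa} says $\p$ is proper $(2s+1)$-harmonic if and only if $a=\tfrac{1}{\sqrt{2s+1}}$ and $b=\pm\sqrt{\tfrac{2s}{2s+1}}$; since $2s+1=k$ and $2s=k-1$, this is again exactly $a=\tfrac{1}{\sqrt{k}}$ and $b=\pm\sqrt{\tfrac{k-1}{k}}$. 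Thus in both parities the criterion is the same, which proves the theorem for all $k\ge 2$.

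As a consistency remark I would verify that the constraint $a^2+b^2=1$ imposed on the parallel hypersurface $S^n(a)\subset S^{n+1}$ is automatically respected, since $\tfrac{1}{k}+\tfrac{k-1}{k}=1$, and that $a=\tfrac{1}{\sqrt k}\in(0,1)$ precisely when $k\ge 2$ (the case $k=1$ being vacuous, as a $1$-harmonic map is harmonic and $\p$ is never harmonic). There is essentially no obstacle: all the analytic content—computing $\olapla^l\tau(\p)$, the curvature terms \eqref{R1}, \eqref{R2}, \eqref{R3}, and substituting into \eqref{2s-harmonic eq} and \eqref{2s+1-harmonic eq}—has already been carried out in Propositions \ref{2s Sa} and \ref{2s+1 Sa}; the present theorem is just their common reformulation, and the only point requiring a little care is to notice that the two different-looking expressions $\sqrt{\tfrac{2s-1}{2s}}$ and $\sqrt{\tfrac{2s}{2s+1}}$ are both instances of $\sqrt{\tfrac{k-1}{k}}$ once $k$ is substituted.
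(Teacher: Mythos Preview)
Your proposal is correct and follows exactly the paper's own approach: the paper's proof is the single line ``By Proposition \ref{2s Sa} and \ref{2s+1 Sa}, we obtain this theorem,'' and you have simply spelled out the parity split and index substitutions that this line encodes. Your additional consistency checks ($a^2+b^2=1$, $a\in(0,1)$ iff $k\ge 2$, and the vacuity of $k=1$) are nice sanity remarks but not required for the argument.
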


\begin{proof}
By Proposition $\ref{2s Sa}$ and $\ref{2s+1 Sa}$, we obtain this theorem.
\end{proof}

Since the radial projection
$$S^n\rightarrow S^n(a),\ \ \ \ x\mapsto ax,$$
is homothetic, a harmonic Riemannian submersion $\vp:(M,g)\rightarrow S^n$
 becomes a harmonic Riemannian submersion $\vp:(M,a^2g)\rightarrow S^n(a),$
 and using the above theorem, we obtain a proper $k$-harmonic submersion $\p:M\rightarrow S^{n+1}.$ 
 Especially we obtain the next corollary.

\begin{cor}
The Hopf map 
$\vp:S^3(\sqrt{k})=\{(z^1,z^2)\in {\mathbb C}^2;\ (z^1)^2+(z^2)^2=k\}\rightarrow S^2(\frac{1}{\sqrt{k}})$,
that is 
$$\vp(z^1,z^2)=\frac{1}{k\sqrt{k}}(2z^1\bar{z^2},|z^1|^2-|z^2|^2)$$
induces a proper $k$-harmonic map.
\end{cor}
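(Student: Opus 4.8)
The plan is to obtain the corollary as an immediate application of the preceding theorem on $k$-harmonic submersions, the only work being to exhibit the Hopf map as a harmonic Riemannian submersion onto a round $2$-sphere of the correct radius.

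First I would recall the classical fact that the Hopf fibration $S^3\to\mathbb{C}P^1$ is a harmonic Riemannian submersion onto the round sphere $S^2(\tfrac12)$ (it is in fact a harmonic morphism). In the present notation this submersion is represented, up to the homothety fixing the target radius, by $(z^1,z^2)\mapsto(2z^1\bar z^2,|z^1|^2-|z^2|^2)$. I would then invoke the homothety remark made just before the corollary: since the radial projections $S^3\to S^3(\sqrt k)$ and $S^2(\tfrac12)\to S^2(\tfrac1{\sqrt k})$ are homothetic, composing the Hopf map with them and rescaling the domain metric accordingly produces a harmonic Riemannian submersion $\vp:S^3(\sqrt k)\to S^2(\tfrac1{\sqrt k})$ given by the stated formula $\vp(z^1,z^2)=\tfrac{1}{k\sqrt k}(2z^1\bar z^2,|z^1|^2-|z^2|^2)$. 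Here I would verify directly that on $S^3(\sqrt k)$ one has $|\vp|^2=\tfrac1{k^3}(|z^1|^2+|z^2|^2)^2=\tfrac1k$, confirming that the image is $S^2(\tfrac1{\sqrt k})$, and that $\vp$ is a Riemannian submersion by tracking the horizontal dilation factors through the two homotheties.

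Next I would apply the theorem preceding the corollary with $n=2$, $a=\tfrac1{\sqrt k}$ and $b=\sqrt{1-a^2}=\sqrt{\tfrac{k-1}{k}}$, so that $S^2(\tfrac1{\sqrt k})$ is realized inside $S^3=S^{n+1}$ as the parallel hypersurface $S^n(a)\times\{b\}$ to which the theorem applies. Since $a=\tfrac1{\sqrt k}$, the theorem (through Propositions \ref{2s Sa} and \ref{2s+1 Sa}) yields that $\p=i\circ\vp:S^3(\sqrt k)\to S^3$ is proper $k$-harmonic, which is exactly the assertion. I would also remark that $\vp$ itself is not harmonic (as already noted in the submersion section, $\tau(\p)=-\tfrac{n}{c}\eta\neq0$ forces $\p$ to be non-harmonic), so the resulting $k$-harmonic map is genuinely proper.

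The step I expect to be the real (though not deep) obstacle is the normalization bookkeeping in the second paragraph: one must check that the particular map written down — with domain radius $\sqrt k$ and the constant $\tfrac{1}{k\sqrt k}$ — is a \emph{harmonic Riemannian submersion} onto $S^2(\tfrac1{\sqrt k})$, i.e. that the hypotheses of the submersion theorem are actually met, since the conclusion is sensitive to whether the horizontal dilation is exactly $1$. Once the constants are pinned down so that $\vp$ satisfies the hypotheses with $a=\tfrac1{\sqrt k}$, the corollary follows with no further computation.
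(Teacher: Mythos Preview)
Your approach matches the paper's exactly: the corollary is stated without a separate proof, being an immediate consequence of the preceding theorem together with the homothety remark that a harmonic Riemannian submersion $\vp:(M,g)\to S^n$ yields one $\vp:(M,a^2g)\to S^n(a)$, applied here with $n=2$, $a=\tfrac{1}{\sqrt{k}}$ and the classical Hopf fibration as input. Your emphasis on checking the normalization so that the written map is a genuine Riemannian submersion onto $S^2(\tfrac{1}{\sqrt{k}})$ is the only substantive point, and it is precisely what the paper leaves implicit; one small slip is that in your final remark it is $\p=i\circ\vp$, not $\vp$, that fails to be harmonic.
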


Let $n_1,n_2$ be two positive integers such that $n=n_1+n_2$ and let $r_1,r_2$ be two positive real numbers such that 
 $r_1^2+r_2^2=1$.
 Let $\vp_j:(M_j,g_j)\rightarrow S^{n_j}(r_j)$ $(j=1,2)$ be harmonic Riemannian submersions,
 and $\p=i\circ(\vp_1\times \vp_2)$, where $i:S^{n_1}(r_1)\times S^{n_2}(r_2)\rightarrow S^{n+1}$ is the canonical inclusion.
 
C. Oniciuc \cite{co1} showed the following.
 
\begin{thm}[\cite{co1}]
The map $\p$ is a proper biharmonic submersion if and only if $r_1=r_2=\frac{1}{\sqrt{2}}$ and $n_1\neq n_2$.
\end{thm}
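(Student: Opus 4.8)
The plan is to adapt the proof strategy of Proposition~\ref{2s Sa} to the product situation, where the ambient submanifold is $S^{n_1}(r_1)\times S^{n_2}(r_2)\subset S^{n+1}$ instead of a single round hypersurface $S^n(a)$. First I would recall the geometry of the product of spheres as a hypersurface of $S^{n+1}$: there is a unit normal $\eta$ along $\p(M)$, and the second fundamental form acts as a scalar on each factor, namely $B(X_{\alpha},Y_{\alpha})=-\tfrac{r_2}{r_1}\langle X_{\alpha},Y_{\alpha}\rangle\eta$ on the $S^{n_1}(r_1)$-directions and $B(X_{\beta},Y_{\beta})=+\tfrac{r_1}{r_2}\langle X_{\beta},Y_{\beta}\rangle\eta$ on the $S^{n_2}(r_2)$-directions (up to sign conventions), with $\nabla^{\perp}\eta=0$. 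Since $\vp_1\times\vp_2$ is a harmonic Riemannian submersion, the tension field of $\p$ collapses to the trace of the second fundamental form over the horizontal directions, giving $\tau(\p)=\bigl(-n_1\tfrac{r_2}{r_1}+n_2\tfrac{r_1}{r_2}\bigr)\eta=:\mu\,\eta$, which is a nonzero normal vector field precisely when $n_1 r_2^2\neq n_2 r_1^2$, i.e. (using $r_1^2+r_2^2=1$) when $n_1\neq n_2$ is not forced but $r_1,r_2$ are generic; in particular $\p$ is not harmonic in the cases of interest.

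The key computational step is to show, exactly as in Proposition~\ref{2s Sa}, that $\eta$ is an eigenvector of the rough Laplacian: $\olapla\eta=\lambda\eta$ for a constant $\lambda$ depending on $n_1,n_2,r_1,r_2$. This follows by splitting the sum defining $\olapla$ into horizontal directions from each of the two factors and vertical directions, using $\onabla_{X_{\alpha}}\eta=\tfrac{r_2}{r_1}Y_{\alpha}$ on the first factor and $\onabla_{X_{\beta}}\eta=-\tfrac{r_1}{r_2}Y_{\beta}$ on the second (the shape operator being a scalar on each factor), together with the Gauss formula relating $\nabla^{S^{n+1}}$ to the connection on the product and the normal component, and the fact that $\eta$ is parallel in the normal bundle. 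The vertical directions contribute nothing because $d\vp$ kills them and $\vp_1\times\vp_2$ is harmonic, just as in the single-sphere case. One then gets $\olapla^l\tau(\p)=\mu\lambda^l\eta$ for all $l$, and the curvature terms $R^{S^{n+1}}(\cdot,d\p(X_i))d\p(X_i)$ applied to these are again scalar multiples of $\eta$, since $S^{n+1}$ has constant curvature $1$ and all the relevant vectors are either tangent or normal to $\p(M)$. Substituting these into the biharmonic equation $\olapla\tau(\p)-R^{S^{n+1}}(\tau(\p),d\p(e_i))d\p(e_i)=0$ from Theorem~\ref{2s-harmonic eq} (with $s=1$) yields a single scalar equation in $r_1,r_2,n_1,n_2$, and solving it under the constraint $r_1^2+r_2^2=1$ and $\mu\neq0$ (properness) gives $r_1=r_2=\tfrac{1}{\sqrt 2}$ and $n_1\neq n_2$.

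The main obstacle I anticipate is bookkeeping the two different scalar values of the shape operator on the two factors and keeping the signs consistent throughout — in particular making sure that the ``properness'' condition $\mu\neq0$ is correctly translated into $n_1\neq n_2$ once $r_1=r_2=\tfrac{1}{\sqrt 2}$ is imposed, and conversely that when $r_1=r_2=\tfrac{1}{\sqrt2}$ the biharmonic scalar equation is automatically satisfied regardless of $n_1,n_2$, so that the only remaining constraint is properness. A secondary point to handle carefully is justifying that $\eta$ really is a genuine eigenvector of $\olapla$ on all of $M$ (not just pointwise): this uses that $\lambda$ is a universal constant independent of the point, which in turn relies on $\vp_1\times\vp_2$ being a Riemannian submersion with the horizontal lift behaving as in the proof of Proposition~\ref{2s Sa}. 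Once these are in place, the computation is routine and parallels the earlier proof verbatim; indeed one could simply remark that the argument is the same as that of Proposition~\ref{2s Sa} with $S^n(a)$ replaced by $S^{n_1}(r_1)\times S^{n_2}(r_2)$ and the constant $-\tfrac1c$ replaced by the two constants $-\tfrac{r_2}{r_1}$ and $\tfrac{r_1}{r_2}$.
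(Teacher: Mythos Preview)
Your proposal is correct and follows essentially the same approach as the paper. Note that the paper itself does not give a separate proof of this cited theorem of Oniciuc; rather, the proof of the very next (generalizing) theorem contains it as the case $k=2$: there the paper introduces the unit normal $\xi(p)=\bigl(\tfrac{r_2}{r_1}p_1,-\tfrac{r_1}{r_2}p_2\bigr)$, records $\tau(\p)=\bigl(\tfrac{r_1}{r_2}n_2-\tfrac{r_2}{r_1}n_1\bigr)\xi$ and $\tau_2(\p)=\tfrac{r_2^2-r_1^2}{r_1r_2}\,|\tau(\p)|^2\,\xi$ ``by a straightforward computation,'' from which the biharmonic condition $r_1=r_2=\tfrac{1}{\sqrt2}$ together with properness $n_1\neq n_2$ follows immediately. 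Your plan---computing $\tau(\p)=\mu\eta$, showing $\olapla\eta=\lambda\eta$ by splitting into the two horizontal blocks and the vertical directions exactly as in Proposition~\ref{2s Sa}, and then reading off the scalar biharmonic equation---is precisely the computation the paper is summarizing, just with the intermediate steps spelled out.
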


We consider general case, and get the next results.

\begin{thm}
The map $\p$ is a proper $k$-harmonic submersion if and only if

$$(\frac{r^2_2}{r^2_1}n_1+\frac{r^2_1}{r^2_2}n_2)^2-n(\frac{r_2^2}{r_1^2}n_1+\frac{r_1^2}{r_2^2}n_2)
-(k-2)(\frac{r_1}{r_2}n_2-\frac{r_2}{r_1}n_1)^2=0,$$
and
$$\frac{r_1}{r_2}n_2-\frac{r_2}{r_1}n_1\neq0.$$
\end{thm}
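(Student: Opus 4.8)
The plan is to reduce the product-submersion case to the same computation that proved Proposition \ref{to sphere}, by showing that $\p = i\circ(\vp_1\times\vp_2)$ behaves, from the point of view of its tension field and iterated rough Laplacians, exactly like an isometric immersion with parallel second fundamental form into $S^{n+1}$. First I would set up local orthonormal frames adapted to the horizontal and vertical distributions of each submersion $\vp_j$, exactly as in the proof of Proposition \ref{2s Sa}: near a point the frame on $M_j$ splits as $\{X^{(j)}_{\alpha_j}\}\cup\{X^{(j)}_{s_j}\}$ with the vertical part in $\ker d\vp_j$. Since each $\vp_j$ is a harmonic Riemannian submersion, $\tau(\vp_j)=0$ and the tension field of the composite is carried entirely by the second fundamental form of the product hypersurface $S^{n_1}(r_1)\times S^{n_2}(r_2)$ in $S^{n+1}$. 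Writing $\xi$ for the unit normal, one computes $\tau(\p)=\bigl(\tfrac{n_1}{\kappa_1}-\tfrac{n_2}{\kappa_2}\bigr)\xi$ for appropriate constants $\kappa_1,\kappa_2$ coming from the radii $r_1,r_2$ (this is where the quantity $\tfrac{r_1}{r_2}n_2-\tfrac{r_2}{r_1}n_1$ first appears, up to normalization), and one checks that $B$ of the product hypersurface is parallel in the ambient sense needed, with the two principal curvatures constant on the two factors.

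The second step is to iterate $\olapla$. Because $\xi$, when differentiated along horizontal directions, produces tangential vectors with constant coefficients (the principal curvatures) and because the vertical directions contribute nothing new once $\tau(\vp_j)=0$ — precisely the mechanism already exploited for $\olapla^l\eta$ in Proposition \ref{2s Sa} — I expect $\olapla\tau(\p) = \lambda\,\tau(\p)$ for a constant $\lambda$ expressible as $\tfrac{r_2^2}{r_1^2}n_1 + \tfrac{r_1^2}{r_2^2}n_2$ (up to the same normalization), hence $\olapla^l\tau(\p)=\lambda^l\tau(\p)$. I would then substitute into the curvature terms using $R^{S^{n+1}}(U,V)W = h(V,W)U - h(W,U)V$: since $\tau(\p)$ is normal and $d\p$ is tangential, $R^{S^{n+1}}(\olapla^l\tau(\p),d\p(e_i))d\p(e_i) = n\,\olapla^l\tau(\p)$, and the mixed terms $R^{S^{n+1}}(\onabla_{e_i}\olapla^a\tau,\olapla^b\tau)d\p(e_i)$ collapse to scalar multiples of $\tau(\p)$ just as in \eqref{R2} and \eqref{R3}. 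Plugging these into \eqref{2s-harmonic eq} and \eqref{2s+1-harmonic eq} and collecting, the $k$-harmonic equation $\tau_k(\p)=0$ becomes (for $\tau(\p)\neq0$) a single scalar identity $\lambda^2 - n\lambda - (k-2)\mu^2 = 0$ where $\lambda = \tfrac{r_2^2}{r_1^2}n_1+\tfrac{r_1^2}{r_2^2}n_2$ and $\mu = \tfrac{r_1}{r_2}n_2-\tfrac{r_2}{r_1}n_1$, which is exactly the claimed relation; the side condition $\mu\neq0$ is what makes the solution \emph{proper} (non-harmonic), since $\tau(\p)=0$ iff $\mu=0$.

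The main obstacle I anticipate is the bookkeeping in Step 1: verifying cleanly that all vertical-direction contributions to $\onabla\xi$, $\onabla\onabla\xi$, and to the covariant derivatives of $d\p$ vanish or combine correctly, given that the two factors have \emph{different} principal curvatures, so $\xi$ is not an umbilic normal and one cannot simply quote Lemma \ref{olapla^k B} verbatim. One must track separately the $n_1$ horizontal directions of the first factor and the $n_2$ of the second, check that $B$ restricted to each factor is parallel and that cross terms between the factors vanish (they do, because $S^{n_1}(r_1)\times S^{n_2}(r_2)$ is a Riemannian product and the normal $\xi$ decomposes along the two position vectors), and confirm that the harmonicity of each $\vp_j$ kills the vertical terms exactly as in Proposition \ref{2s Sa}. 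Once that structural fact is in hand — essentially that $\p$ has ``parallel second fundamental form'' relative to $S^{n+1}$ in the operative sense, with $\olapla\tau(\p)=\lambda\tau(\p)$ — the rest is the same algebra as Proposition \ref{to sphere}, done in the two parities $k=2s$ and $k=2s+1$ and then unified.
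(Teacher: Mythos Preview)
Your proposal is correct and follows essentially the same route as the paper: compute $\tau(\p)=\mu\xi$ for the explicit unit normal $\xi(p)=(\tfrac{r_2}{r_1}p_1,-\tfrac{r_1}{r_2}p_2)$, show $\olapla\xi=\lambda\xi$ with $\lambda=\tfrac{r_2^2}{r_1^2}n_1+\tfrac{r_1^2}{r_2^2}n_2$ (so $\olapla^l\tau(\p)=\lambda^l\tau(\p)$), and then plug into \eqref{2s-harmonic eq}, \eqref{2s+1-harmonic eq} exactly as in Proposition~\ref{to sphere} to obtain $\tau_k(\p)=\lambda^{k-3}\mu\{\lambda^2-n\lambda-(k-2)\mu^2\}\xi$. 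The paper's own proof compresses all of this into ``by a straightforward computation,'' so your outline in fact supplies more of the mechanism than the published argument does; the one place to be careful (which you already flagged) is that the two factors have distinct principal curvatures $-\tfrac{r_2}{r_1}$ and $\tfrac{r_1}{r_2}$, so the tangential pieces of $\onabla_{e_i}\onabla_{e_i}\xi$ must be handled factor by factor before they cancel against $\onabla_{\nabla_{e_i}e_i}\xi$.
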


\begin{proof}
Let 
$$\xi(p)=\left(\frac{r_2}{r_1}p_1,-\frac{r_1}{r_2}p_2\right),$$
where $p=(p_1,p_2)\in S^{n_1}(r_1)\times S^{n_2}(r_2).$
Then $\xi$ is a unit section in the normal bundle of $S^{n_1}(r_1)\times S^{n_2}(r_2)$ in $S^{n+1}$.
By a straightforward computation we obtain 
$$\tau(\p)=(\frac{r_1}{r_2}n_2-\frac{r_2}{r_1}n_1)\xi,$$
$$\tau_2(\p)=\frac{r^2_2-r^2_1}{r_1r_2}|\tau(\p)|^2\xi,$$
When, $k=3,4,\cdots,$
\begin{align}
\tau_k(\p)
=&(\frac{r^2_2}{r^2_1}n_1+\frac{r^2_1}{r^2_2}n_2)^{k-3}(\frac{r_1}{r_2}n_2-\frac{r_2}{r_1}n_1)\\
&\{(\frac{r^2_2}{r^2_1}n_1+\frac{r^2_1}{r^2_2}n_2)^2-n(\frac{r_2^2}{r_1^2}n_1+\frac{r_1^2}{r_2^2}n_2)
-(k-2)(\frac{r_1}{r_2}n_2-\frac{r_2}{r_1}n_1)^2\}\xi,\notag
\end{align}
Therefore, we have the theorem.
\end{proof}

\begin{cor}
i)\ If $n_2=n_1$, 

1)When, $k=2,3,4$

There are no proper $k$-harmonic submersion.

2)When, $k=5,6,\cdots $ 

The map $\p$ is a proper $k$-harmonic submersion if and only if
$$r_1=\sqrt\frac{1+\sqrt{\frac{k-4}{k}}}{2},r_2=\sqrt\frac{1-\sqrt{\frac{k-4}{k}}}{2},$$
or
$$r_1=\sqrt\frac{1-\sqrt{\frac{k-4}{k}}}{2},r_2=\sqrt\frac{1+\sqrt{\frac{k-4}{k}}}{2}.$$
\end{cor}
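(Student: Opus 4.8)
The plan is to specialize the theorem immediately preceding this corollary to the case $n_1=n_2$. Writing $n_1=n_2=\frac n2$, I would introduce the single quantity $v=\frac{r_2^2}{r_1^2}+\frac{r_1^2}{r_2^2}$, which by the arithmetic--geometric mean inequality satisfies $v\geq 2$, with equality precisely when $r_1=r_2$. A one-line computation gives $\frac{r_2^2}{r_1^2}n_1+\frac{r_1^2}{r_2^2}n_2=\frac n2\,v$ and $\bigl(\frac{r_1}{r_2}n_2-\frac{r_2}{r_1}n_1\bigr)^2=\frac{n^2}{4}(v-2)$, so after dividing the characteristic equation of the theorem by $\frac{n^2}{4}$ the condition for $\p$ to be $k$-harmonic becomes
\begin{align*}
v^2-kv+2(k-2)=0 .
\end{align*}

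The decisive point is that the discriminant of this quadratic equals $k^2-8k+16=(k-4)^2$, a perfect square, so its roots are $v=2$ and $v=k-2$ for every value of $k$. The root $v=2$ forces $r_1=r_2$, hence $\frac{r_1}{r_2}n_2-\frac{r_2}{r_1}n_1=0$, and is therefore excluded by the properness hypothesis. Consequently a proper $k$-harmonic submersion exists in the case $n_1=n_2$ if and only if the remaining equation $v=k-2$ has a solution with $v>2$, i.e. if and only if $k-2>2$. This settles part 1): for $k=2,3$ the value $k-2$ is strictly less than $2$ and for $k=4$ it equals $2$, so in each of these cases the only candidate violates properness and no proper $k$-harmonic submersion exists.

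For $k\geq 5$ I would solve $\frac{r_2^2}{r_1^2}+\frac{r_1^2}{r_2^2}=k-2$ simultaneously with $r_1^2+r_2^2=1$. Putting $u=r_2^2/r_1^2$ reduces this to $u^2-(k-2)u+1=0$, whose two roots have product $1$ and hence are interchanged by swapping $r_1$ and $r_2$; explicitly $u=\frac{k-2\pm\sqrt{k(k-4)}}{2}$. Using $r_1^2=\frac{1}{1+u}$ and rationalizing, one finds $r_1^2=\frac12\bigl(1\mp\sqrt{\frac{k-4}{k}}\bigr)$ and $r_2^2=\frac12\bigl(1\pm\sqrt{\frac{k-4}{k}}\bigr)$, which is exactly the pair of solutions listed in part 2). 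To finish I would check admissibility: for $k\geq 5$ one has $0<\sqrt{\frac{k-4}{k}}<1$, so both $r_1^2$ and $r_2^2$ lie in $(0,1)$ and $r_1\neq r_2$, so these are genuine parameters yielding a proper $k$-harmonic submersion, and conversely any such submersion must arise this way by the reduction above.

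Everything here is routine once the substitution $v=\frac{r_2^2}{r_1^2}+\frac{r_1^2}{r_2^2}$ is in place; the only steps requiring a little attention are recognizing that the discriminant is the perfect square $(k-4)^2$ and correctly identifying $v=2$ as the spurious, non-proper root, which is precisely what makes the threshold $k=5$ emerge.
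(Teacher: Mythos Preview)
Your proposal is correct and is exactly the intended derivation: the paper states this corollary immediately after the theorem without giving a proof, so the implicit argument is precisely the specialization $n_1=n_2=\tfrac{n}{2}$ that you carry out. Your substitution $v=\frac{r_2^2}{r_1^2}+\frac{r_1^2}{r_2^2}$, the resulting quadratic $v^2-kv+2(k-2)=0$ with discriminant $(k-4)^2$, the identification of $v=2$ as the non-proper root, and the final rationalization to obtain $r_j^2=\tfrac12\bigl(1\pm\sqrt{(k-4)/k}\bigr)$ are all correct and constitute a complete proof.
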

\vspace{5pt}
\begin{ex}

i)\ If $n_2=2n_1$ 

The map $\p$ is a proper $3$-harmonic submersion if and only if
$$r_1=\frac{\sqrt{3^{\frac{1}{6}}(81\sqrt{19}+197\sqrt{3})^{\frac{2}{3}}
+113^{\frac{1}{3}}(81\sqrt{19}+197\sqrt{3})^{\frac{1}{3}}
-14\sqrt{3}}}{3^{\frac{5}{3}}(81\sqrt{19}+197\sqrt{3})^{\frac{1}{6}}},$$
$$r_2=\frac{\sqrt{-3^{\frac{1}{6}}(81\sqrt{19}+197\sqrt{3})^{\frac{2}{3}}
+163^{\frac{1}{3}}(81\sqrt{19}+197\sqrt{3})^{\frac{1}{3}}
+14\sqrt{3}}}{3^{\frac{5}{3}}(81\sqrt{19}+197\sqrt{3})^{\frac{1}{6}}},$$

\begin{flushright}
$\cdots$ etc.
\end{flushright}

\vspace{5pt}

ii)\ If $n_2=cn_1,\ \ (c=2,3,\cdots)$

The map $\p$ is a proper $4$-harmonic submersion if and only if
$$r_1=\sqrt{\frac{(c-1)^{\frac{5}{3}}+(c+3)C^{\frac{1}{3}}+(c-1)^{\frac{1}{3}}C^{\frac{2}{3}}}{4C^{\frac{1}{3}}(c+1)}},$$
$$r_2=\sqrt{\frac{-(c-1)^{\frac{5}{3}}+(3c+1)C^{\frac{1}{3}}-(c-1)^{\frac{1}{3}}C^{\frac{2}{3}}}{4C^{\frac{1}{3}}(c+1)}},$$
where, $C=c^2+4\sqrt{c}(c+1)+6c+1$.
\end{ex}

\begin{flushleft}
\textbf{Acknowledgements}
\end{flushleft}
The author wishes to thank the {\em Research Fellowships of the Japan Society for the Promotion of Science for Young Scientists},
 which made possible his study and life of the author's family.
 
We would like to express our gratitude to Professor Hajime Urakawa who introduced and helped to accomplish this paper.



\end{document}